\newtheorem{theo}{Theorem}[section]
\newtheorem{lemm}{Lemma}[section]
\newtheorem{prop}{Proposition}[section]
\newtheorem{rema}{Remark}[section]
\numberwithin{equation}{section}
\def\ud{{\rm d}}
\begin{document}

\title{Ill-posedness for the Cauchy problem of the Camassa-Holm equation in $B^{1}_{\infty,1}(\mathbb{R})$}
\author{
Yingying $\mbox{Guo}^{1}$ \footnote{email: guoyy35@fosu.edu.cn}
\quad
Weikui $\mbox{Ye}^{2}$ \footnote{email: 904817751@qq.com}
\quad and \quad
Zhaoyang $\mbox{Yin}^{3}$ \footnote{email: mcsyzy@mail.sysu.edu.cn}\\
$^1\mbox{School}$ of Mathematics and Big Data, Foshan University,\\
Foshan, 528000, China\\
$^2\mbox{Institute}$ of Applied Physics and Computational Mathematics,\\
P.O. Box 8009, Beijing 100088, P. R. China\\
$^3\mbox{Department}$ of Mathematics, Sun Yat-sen University,\\
Guangzhou, 510275, China\\
$^3\mbox{Faculty}$ of Information Technology,\\
Macau University of Science and Technology, Macau, China
}
\date{}
\maketitle
\begin{abstract}
For the famous Camassa-Holm equation, the well-posedness in $B^{1+\frac{1}{p}}_{p,1}(\mathbb{R})$ with $ p\in [1,\infty)$ and the ill-posedness in $B^{1+\frac{1}{p}}_{p,r}(\mathbb{R})$ with $ p\in [1,\infty],\ r\in (1,\infty]$ had been studied in \cite{d1,d2,glmy,yyg}, that is to say, it only left an open problem in the critical case $B^{1}_{\infty,1}(\mathbb{R})$ proposed by Danchin in \cite{d1,d2}. In this paper, we solve this problem by proving the norm inflation and hence the ill-posedness for the Camassa-Holm equation in $B^{1}_{\infty,1}(\mathbb{R})$. Therefore, the well-posedness and ill-posedness for the Camassa-Holm equation in all critial Besov spaces $B^{1+\frac{1}{p}}_{p,1}(\mathbb{R})$ with $ p\in [1,\infty]$ have been completed. Finally, since the norm inflation occurs by choosing an special initial data $u_0\in B^{1}_{\infty,1}(\mathbb{R})$ but $u^2_{0x}\notin B^{0}_{\infty,1}(\mathbb{R})$ (an example implies $B^{0}_{\infty,1}(\mathbb{R})$ is not a Banach algebra), we then prove that this condition is necessary. That is, if $u^2_{0x}\in B^{0}_{\infty,1}(\mathbb{R})$ holds, then the Camassa-Holm equation has a unique solution $u(t,x)\in \mathcal{C}_T(B^{1}_{\infty,1}(\mathbb{R}))\cap \mathcal{C}^{1}_T(B^{0}_{\infty,1}(\mathbb{R}))$ and the norm inflation will not occur.
\end{abstract}
Mathematics Subject Classification: 35Q35, 35G25, 35L30\\
\noindent \textit{Keywords}: Camassa-Holm equation, Norm inflation, Ill-posedness, Critical Besov spaces, Banach algebra.


\section{Introduction}
\par
In the paper, we consider the following Cauchy problem for the Camassa-Holm (CH) equation \cite{ch}
\begin{equation}\label{ch}
\left\{\begin{array}{l}
u_t-u_{xxt}+3uu_{x}=2u_{x}u_{xx}+uu_{xxx},\qquad t>0,\ x\in\mathbb{R},\\
u(0,x)=u_0(x),\qquad x\in\mathbb{R}.
\end{array}\right.
\end{equation}
The CH equation is completely integrable \cite{c3,cgi} and admits infinitely many conserved quantities, an infinite hierarchy of quasi-local symmetries, a Lax pair and a bi-Hamiltonian structure \cite{c1,or}.

There are a lot of literatures devoted to studying the CH equation. Li and Olver \cite{lio} (see also \cite{rb}) established the local well-posedness for the Cauchy problem \eqref{ch} in Sobolev space $H^{s}(\mathbb{R})(s>\frac 3 2)$. Byers \cite{b} obtained that the Cauchy problem \eqref{ch} is ill-posed in $H^{s}(\mathbb{R})$ for $s<\frac{3}{2}$ in the sense of norm inflation. Then, by the Littlewood-Paley decomposition theory and transport equations theory, Danchin \cite{d1,d2}, Li and Yin \cite{liy} et al proved that the Cauchy problem \eqref{ch} is well-posed in Besov spaces $B^{s}_{p,r}(\mathbb{R})$ with $s>\max\{\frac{3}{2},1+\frac{1}{p}\},\ r<+\infty$ or $s=1+\frac{1}{p},\ p\in[1,2],\ r=1$ ($1+\frac{1}{p}\geq\frac{3}{2}$). Li et al. \cite{lyz1} demonstrated the non-continuity of the CH equation in $B^{\sigma}_{p,\infty}(\mathbb{R})$ with $\sigma>2+\max\{\frac32,1+\frac1p\}$ by constructing a initial data $u_{0}$ such that corresponding solution to the CH equation that starts from $u_{0}$ does not converge back to $u_{0}$ in the norm of $B^{\sigma}_{p,\infty}(\mathbb{R})$ as time goes to zero. Recently, Guo et al. \cite{glmy} established the ill-posedness for the Camassa-Holm type equations in Besov spaces $B^{1+\frac 1 p}_{p,r}(\mathbb{R})$ with $p\in[1,+\infty],\ r\in(1,+\infty]$, which implies $B^{1+\frac 1 p}_{p,1}(\mathbb{R})$ is the critical Besov space for the CH equation. However, there is no answer to the space $B^{1+\frac{1}{p}}_{p,1}(\mathbb{R})$ with $p\in(2,+\infty]$ ($1+\frac{1}{p}<\frac{3}{2}$). The main difficult is that the CH equation induce a loss of one order derivative in the stability estimates. To overcome this difficult, we adopt the compactness argument and Lagrangian coordinate transformation in our upcoming article \cite{yyg} rather than the usual techniques used in \cite{liy} to obtain the local well-posedness for the Cauchy problem of the Camassa-Holm type equations in $B^{1+\frac 1 p}_{p,1}(\mathbb{R})$ with $p\in[1,+\infty)$. This implies $B^{1+\frac 1 p}_{p,1}(\mathbb{R})$ is the critical Besov space and the index $\frac 3 2$ is not necessary for the Camassa-Holm type equations. 

Furthermore, in contrast to the KdV equation who can not describe the wave breaking phenomena observed in nature, the CH equation not only possesses global strong solutions \cite{ce2,ce4,ce5}, but also has wave-breaking phenomena \cite{c2,ce3,lio}. When this happens, the solution remains H\"{o}lder continuous and uniformly bounded, but develops an unbounded slope in finite time \cite{c2}. The questions about how to continue the solution beyond wave breaking can be nicely studied in the case of multipeakons. Multipeakons are given by (see \cite{ch} and references therein)
\begin{align}
u(t,x)=\sum\limits_{i=1}^{N}p_i(t)e^{-|x-q_i(t)|}\label{mult}
\end{align}
where $p_i,\ q_i$ satisfy
\begin{equation*}
\left\{\begin{array}{ll}
\frac{{\ud}p_i}{{\ud}t}=\sum\limits_{i\neq j}^{N}p_ip_j{\rm sign}(q_i-q_j)e^{-|q_i-q_j|},\\
\frac{{\ud}q_i}{{\ud}t}=\sum\limits_{j}^{N}p_je^{-|q_i-q_j|}.
\end{array}\right.
\end{equation*}
Observe that the solution \eqref{mult} is not smooth even with continuous functions $(p_{i}(t),q_{i}(t))$, one possible way to interpret \eqref{mult} as a weak solution of \eqref{ch} is to rewrite \eqref{ch} as
\begin{equation}\label{nonch}
\left\{\begin{array}{l}
u_{t}+uu_{x}+\partial_{x}(1-\partial_{xx})^{-1}\Big(u^{2}+\frac{u^{2}_{x}}{2}\Big)=0,\qquad t>0,\ x\in\mathbb{R},\\
u(0,x)=u_0(x),\qquad x\in\mathbb{R}.
\end{array}\right.
\end{equation}\\

First, we recall some recent results for the CH equation in critical Besov spaces.
\begin{theo}[See \cite{yyg}]\label{wellch}
Let $u_0\in B^{1+\frac{1}{p}}_{p,1}(\mathbb{R})$ with $1\leq p<\infty$ (~when $p=\infty,$ we choose $u_0\in B^{1+\epsilon}_{\infty,1}(\mathbb{R}),\ \forall \epsilon>0$). Then there exists a time $T>0$ such that the CH equation with the initial data $u_{0}$ is locally well-posed in the sense of Hadamard.
\end{theo}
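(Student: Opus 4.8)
The plan is to treat Theorem~\ref{wellch} through the equivalent nonlocal transport form \eqref{nonch}, writing $u_t+uu_x=F(u)$ with $F(u)=-\partial_x(1-\partial_{xx})^{-1}\big(u^2+\tfrac12u_x^2\big)$, and to establish separately existence, uniqueness and continuous dependence on the data. Two structural facts drive the argument. First, $\partial_x(1-\partial_{xx})^{-1}$ is a Fourier multiplier of order $-1$, so it maps $B^{s}_{p,r}(\mathbb{R})$ into $B^{s+1}_{p,r}(\mathbb{R})$; concretely $\partial_x(1-\partial_{xx})^{-1}f(x)=-\tfrac12\int_{\mathbb{R}}\mathrm{sign}(x-y)e^{-|x-y|}f(y)\,\ud y$. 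Second, on $\mathbb{R}$ the space $B^{1/p}_{p,1}$, hence also $B^{1+1/p}_{p,1}$, is a Banach algebra when $p<\infty$, while $B^{0}_{\infty,1}$ is not --- this is exactly why the endpoint has to be stated with an extra $\epsilon$, since for $p=\infty$ one needs $u_x^2\in B^{\epsilon}_{\infty,1}$, which does hold because $B^{\epsilon}_{\infty,1}$ is an algebra. Combining the two facts, if $u\in B^{1+1/p}_{p,1}$ (resp.\ $u\in B^{1+\epsilon}_{\infty,1}$) then $u^2+\tfrac12u_x^2\in B^{1/p}_{p,1}$ (resp.\ $B^{\epsilon}_{\infty,1}$), so $F(u)$ lies in the same space as $u$ with a quadratic bound on its norm. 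The obstruction to a naive iteration scheme is the classical one-derivative loss of the CH equation in stability estimates.

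For existence I would run an approximation scheme: solve the linear transport problems $\partial_t u^{n+1}+u^n\partial_x u^{n+1}=F(u^n)$ with $u^{n+1}(0)=S_{n+1}u_0$ and $u^0\equiv S_0u_0$. The Besov transport estimates of Bahouri--Chemin--Danchin type, together with the algebra/multiplier bound for $F$, furnish on a time interval $[0,T]$ with $T\simeq\|u_0\|_{B^{1+1/p}_{p,1}}^{-1}$ a uniform bound $\|u^n\|_{L^\infty_T(B^{1+1/p}_{p,1})}\le C\|u_0\|_{B^{1+1/p}_{p,1}}$ and, from the equation, a uniform bound on $\partial_tu^n$ in $L^\infty_T(B^{1/p}_{p,1})$. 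The latter gives equicontinuity in time, so by an Aubin--Lions/Arzel\`a--Ascoli compactness argument a subsequence converges in $C([0,T];B^{1+1/p-\delta}_{p,1,\mathrm{loc}})$ to a limit $u$; passing to the limit shows $u$ solves \eqref{nonch}. That $u$ actually belongs to $\mathcal{C}_T(B^{1+1/p}_{p,1})\cap\mathcal{C}^{1}_T(B^{1/p}_{p,1})$ in the strong (not merely weak) topology is then recovered from the transport structure itself, by invoking continuity of the flow map of the linear transport equation once the drift and the right-hand side are known to be sufficiently regular.

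Uniqueness and continuous dependence I would obtain by the Lagrangian change of variables, which is precisely what neutralises the derivative loss. Let $\phi(t,\cdot)$ be the flow of $u$, i.e.\ $\partial_t\phi=u(t,\phi)$, $\phi(0,x)=x$; setting $v=u\circ\phi$ and using the convolution formula above after the substitution $y=\phi(t,z)$, one rewrites \eqref{nonch} as a closed system of integral equations for $\big(\phi-\mathrm{id},\,v,\,\phi_x\big)$ whose right-hand side is built only from the smoothing kernel $\tfrac12e^{-|x|}$ and from products. Hence that right-hand side is locally Lipschitz on balls of $\mathcal{C}([0,T];B^{1+1/p}_{p,1})$ (with $\mathcal{C}([0,T];B^{\epsilon}_{\infty,1})$ playing the role of the denominators when $p=\infty$, so that $\phi_x$ and $1/\phi_x$ stay controlled) with no loss of derivative, and a Picard--Lindel\"of argument yields a unique Lagrangian solution depending Lipschitz-continuously --- indeed analytically in $t$ --- on the data. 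Transforming back, with the verification that $(\phi,v)\mapsto u=v\circ\phi^{-1}$ is bicontinuous on the relevant spaces (which needs $\phi_x$ bounded above and away from $0$ on $[0,T]$), delivers uniqueness and continuous dependence for \eqref{ch} in the Eulerian variable. An alternative route to the strong continuous-dependence statement is to keep the Eulerian formulation and run a Bona--Smith argument: solve with mollified data $S_j u_0$, use continuity of the solution map for smooth data together with the uniform bounds, and upgrade the convergence from $B^{1+1/p-\delta}_{p,1}$ to $B^{1+1/p}_{p,1}$.

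The main obstacle, and the reason the critical index is genuinely harder than $s>\max\{3/2,1+1/p\}$, is the derivative loss: no contraction estimate closes directly in $B^{1+1/p}_{p,1}$, so the whole scheme must be routed through Lagrangian coordinates (or a compactness plus Bona--Smith combination). The technical price is the Besov bookkeeping for $\phi$, $\phi_x$ and $1/\phi_x$ --- in particular showing $\phi_x$ never degenerates on $[0,T]$ and that the inverse-flow composition is bicontinuous --- and, when $p=\infty$, the failure of $B^{0}_{\infty,1}$ to be a Banach algebra, which is exactly why one reaches only $B^{1+\epsilon}_{\infty,1}$ rather than $B^{1}_{\infty,1}$ itself; the latter borderline is precisely what the present paper shows to be ill-posed.
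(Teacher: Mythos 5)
Your proposal follows essentially the same route as the paper, which does not reprove Theorem \ref{wellch} but cites \cite{yyg} and records its method in Lemma \ref{lagrabstr}: a compactness/approximation argument for existence plus the Lagrangian coordinate transformation to neutralise the one-derivative loss, with uniqueness from the Lipschitz estimate in Lagrangian variables and continuous dependence from an additional Eulerian Lipschitz bound (as in \eqref{Fkl}) or a Bona--Smith step, just as you outline. No gaps worth flagging.
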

\begin{theo}[See \cite{glmy}]\label{illch}
Let $1\leq p\leq\infty,\ 1<r\leq\infty$. For any $\varepsilon>0$, there exists  $u_0\in H^{\infty}(\mathbb{R})$ such that the following hold:
\begin{itemize}
\item [{\rm (1)}] $\|u_{0}\|_{B^{1+\frac{1}{p}}_{p,r}}\leq\varepsilon;$
\item [{\rm (2)}] There is a unique solution $u\in \mathcal{C}_{T}\big(H^{\infty}(\mathbb{R})\big)$ to the Cauchy problem \eqref{ch} with $T<\varepsilon;$
\item [{\rm (3)}] $\limsup\limits_{t\rightarrow T^{-}}\|u(t)\|_{B^{1+\frac{1}{p}}_{p,r}}\geq\limsup\limits_{t\rightarrow T^{-}}\|u(t)\|_{B^{1}_{\infty,\infty}}=\infty.$
\end{itemize}
\end{theo}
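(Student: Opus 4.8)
\medskip
\noindent\emph{Proof strategy.} The plan is to exhibit, for each small $\varepsilon>0$, a single explicit datum $u_{0}\in H^{\infty}(\mathbb{R})$ realising (1)--(3) simultaneously through the wave-breaking mechanism of \eqref{ch}, the point being a multiscale choice of $u_{0}$ that decouples the size of its critical norm from the size of its slope. Fix a real $\theta\in\mathcal{S}(\mathbb{R})$ with $\widehat{\theta}$ supported in $\{\frac34\le|\xi|\le\frac32\}$ and $\theta'(0)<0$, and for a large integer $M=M(\varepsilon)$ and $c=c(\varepsilon,M)>0$ put
\[
u_{0}(x):=c\sum_{j=0}^{M-1}2^{-j}\theta(2^{j}x),
\]
a finite sum of Schwartz functions, so $u_{0}\in H^{\infty}(\mathbb{R})$. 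Since $\widehat{\theta}$ lies in one dyadic annulus, the pieces $2^{-j}\theta(2^{j}\cdot)$ occupy pairwise distinct Littlewood--Paley shells, so $\|\Delta_{j}u_{0}\|_{L^{p}}\asymp c\,2^{-j(1+1/p)}\|\theta\|_{L^{p}}$ for $0\le j\le M-1$ and is negligible otherwise; hence, uniformly in $p\in[1,\infty]$,
\[
\|u_{0}\|_{B^{1+\frac1p}_{p,r}}\asymp c\,\|\theta\|_{L^{p}}M^{1/r},\qquad\text{while}\qquad u_{0x}(0)=c\,\theta'(0)M<0 .
\]
Moreover the shells are almost orthogonal in $H^{1}$ and the amplitudes $2^{-j}$ are summable, so $\|u_{0}\|_{H^{1}}\lesssim c$ uniformly in $M$. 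Taking $c:=\varepsilon/(2\|\theta\|_{L^{p}}M^{1/r})$ already gives $\|u_{0}\|_{B^{1+\frac1p}_{p,r}}\le\varepsilon/2$, which is item~(1).

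\smallskip
\noindent\emph{Lifespan.} As $u_{0}\in H^{\infty}\subset H^{s}$ with $s>\tfrac32$, the Li--Olver theory \cite{lio} gives a unique maximal solution $u\in\mathcal{C}\big([0,T);H^{\infty}(\mathbb{R})\big)$ with lifespan $T$ independent of $s$ and obeying the blow-up criterion $T<\infty\Rightarrow\limsup_{t\to T^{-}}\|u_{x}(t)\|_{L^{\infty}}=+\infty$. Differentiating \eqref{nonch} in $x$ and following $w(t):=u_{x}(t,q(t))$ along $\dot q=u(t,q)$, started from a minimiser of $u_{0x}$, one gets, using $(1-\partial_{x}^{2})^{-1}(u^{2}+\tfrac12 u_{x}^{2})\ge0$, the conservation of $\|u\|_{H^{1}}$ and $\|u\|_{L^{\infty}}^{2}\le\tfrac12\|u\|_{H^{1}}^{2}$,
\[
\frac{\ud}{\ud t}w(t)\le-\tfrac12 w(t)^{2}+\tfrac12\|u_{0}\|_{H^{1}}^{2}.
\]
Hence $|\inf_{x}u_{0x}|\ge\sqrt2\,\|u_{0}\|_{H^{1}}$ forces $w(t)\to-\infty$ and $T\le C_{*}/|\inf_{x}u_{0x}|$ with $C_{*}$ absolute. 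For our datum $|\inf_{x}u_{0x}|\ge c|\theta'(0)|M\asymp\varepsilon\,|\theta'(0)|\,\|\theta\|_{L^{p}}^{-1}M^{1-1/r}$, so because $r>1$ a choice of $M$ beyond a threshold $\sim\big(C_{*}\|\theta\|_{L^{p}}\varepsilon^{-2}|\theta'(0)|^{-1}\big)^{r/(r-1)}$ yields both $|\inf_{x}u_{0x}|\ge\sqrt2\,\|u_{0}\|_{H^{1}}$ and $T\le C_{*}/|\inf_{x}u_{0x}|<\varepsilon$, which is item~(2); this is exactly where $r>1$ enters, and the analogous attempt at $r=1$ breaks down here.

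\smallskip
\noindent\emph{Norm inflation.} To pass from $\|u_{x}(t)\|_{L^{\infty}}\to\infty$ to $\|u(t)\|_{B^{1}_{\infty,\infty}}\to\infty$ I would argue by contradiction: if $\sup_{[0,T)}\|u(t)\|_{B^{1}_{\infty,\infty}}\le K<\infty$, then the logarithmic interpolation
\[
\|u_{x}(t)\|_{L^{\infty}}\lesssim\|u(t)\|_{B^{1}_{\infty,1}}\lesssim\|u(t)\|_{B^{1}_{\infty,\infty}}\log\!\Big(e+\frac{\|u(t)\|_{H^{s}}}{\|u(t)\|_{B^{1}_{\infty,\infty}}}\Big),
\]
combined with the $H^{s}$ estimate $\frac{\ud}{\ud t}\|u\|_{H^{s}}\lesssim(\|u_{x}\|_{L^{\infty}}+\|u\|_{H^{1}})\|u\|_{H^{s}}$ and Osgood's lemma, keeps $\|u(t)\|_{H^{s}}$ finite on $[0,T]$; then $\int_{0}^{T}\|u_{x}(t)\|_{L^{\infty}}\,\ud t<\infty$ continues the solution past $T$, contradicting maximality. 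Therefore $\limsup_{t\to T^{-}}\|u(t)\|_{B^{1}_{\infty,\infty}}=\infty$, and the embeddings $B^{1+\frac1p}_{p,r}\hookrightarrow B^{1}_{\infty,r}\hookrightarrow B^{1}_{\infty,\infty}$ give item~(3).

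\smallskip
\noindent\emph{Main obstacle.} The genuine work sits in Step~1: one must check that the shells do not interfere in the $B^{1+1/p}_{p,r}$ norm, that the oscillatory tails of $\sum_{j}\theta'(2^{j}x)$ do not overwhelm the coherent downward spike at $x=0$, and that $\|u_{0}\|_{H^{1}}$ stays $O(c)$ --- in short, a sharp accounting of how the critical norm, the minimal slope, and the conserved $H^{1}$ energy scale with the number $M$ of shells. The remaining ingredients (the Riccati bound for $w$, Li--Olver's local theory, and the Osgood/continuation argument) are standard.
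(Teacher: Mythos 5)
This theorem is not proved in the present paper at all --- it is quoted from \cite{glmy} --- and your proposal is a correct reconstruction of essentially the argument behind that cited result: multiscale data $u_0=c\sum_{j<M}2^{-j}\theta(2^j\cdot)$ whose $B^{1+1/p}_{p,r}$ norm (this is exactly where $r>1$ enters) grows only like $M^{1/r}$ while the slope at the origin grows like $M$ and $\|u_0\|_{H^1}\lesssim c$, a Constantin--Escher type Riccati inequality along characteristics giving wave breaking at a time $T\lesssim 1/|\inf_x u_{0x}|<\varepsilon$, and the logarithmic interpolation plus Osgood/continuation argument upgrading the breakdown to $\limsup_{t\to T^-}\|u(t)\|_{B^{1}_{\infty,\infty}}=\infty$, whence (3) by embedding. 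The only detail to repair is the profile $\theta$: a real $\theta$ with even, annulus-supported $\widehat{\theta}$ is necessarily even, so $\theta'(0)=0$; choose instead an odd profile, e.g.\ $\widehat{\theta}(\xi)=i\,\operatorname{sgn}(\xi)\,g(|\xi|)$ with $0\le g\in\mathcal{D}((3/4,3/2))$, which gives $\theta'(0)<0$, and the rest of your scheme goes through.
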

\begin{theo}[See \cite{lps}]\label{c01ill}
Given $u_{0}\in X\equiv H^{1}(\mathbb{R})\cap W^{1,\infty}(\mathbb{R})$, there exist
a nonincreasing function $T>0$ and a unique solution
$u=u(x, t)$ to \eqref{ch} such that
$u\in Z_{T}\equiv \mathcal{C}\big([-T,T];H^{1}(\mathbb{R})\big)\cap L^{\infty}\big([-T,T];W^{1,\infty}(\mathbb{R})\big)\cap \mathcal{C}^{1}\big((-T,T);L^{2}(\mathbb{R})\big)$
with
$\sup\limits_{[-T,T]}\|u(t)\|_{X}
=\sup\limits_{[-T,T]}\big(\|u(t)\|_{H^{1}}+\|u(t)\|_{W^{1,\infty}}\big)\leq c\|u_{0}\|_{X}$
for some universal constant $c>0$. Moreover, given $B>0$, the map
$u_{0}\mapsto u$, taking the data to the solution, is continuous from the ball
$\{u_{0}\in X: \|u_{0}\|_{X}\leq B\}$ into $Z_{T(B)}$.
\end{theo}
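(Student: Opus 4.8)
\noindent The plan is to establish Theorem \ref{c01ill} by a regularization-and-compactness scheme resting on two a priori bounds: conservation of the $H^1$ energy $\int_{\mathbb{R}}(u^2+u_x^2)\,\ud x$ and a Riccati-type control of $\|u_x\|_{L^\infty}$ along the flow. All of the analysis is carried out on the nonlocal form \eqref{nonch}; writing $p(x)=\tfrac12 e^{-|x|}$, so that $(1-\partial_{xx})^{-1}f=p*f$, the equation reads $u_t+uu_x+\partial_x\big(p*(u^2+\tfrac12 u_x^2)\big)=0$. I would first mollify the datum, $u_0^n=\rho_n*u_0\in H^\infty(\mathbb{R})$, with $u_0^n\to u_0$ in $X$ and $\|u_0^n\|_X\le C\|u_0\|_X$; by the classical $H^s$ ($s>\tfrac32$) theory each $u_0^n$ launches a smooth solution $u^n$, and everything hinges on showing these solutions exist on a common interval $[-T,T]$ with $T$ depending only on (and nonincreasing in) $\|u_0\|_X$, with bounds uniform in $n$.

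For the a priori estimates, differentiating \eqref{nonch} once and using $\partial_{xx}(1-\partial_{xx})^{-1}=-1+(1-\partial_{xx})^{-1}$ gives, along a characteristic $\dot q=u(t,q)$,
\[
\frac{\ud}{\ud t}\,u_x(t,q(t))=-\tfrac12\,u_x(t,q(t))^2+u(t,q(t))^2-\big(p*(u^2+\tfrac12 u_x^2)\big)(t,q(t)).
\]
Since $\|p*(u^2+\tfrac12 u_x^2)\|_{L^\infty}\le\|p\|_{L^\infty}\|u^2+\tfrac12 u_x^2\|_{L^1}\lesssim\|u\|_{H^1}^2$ and $\|u\|_{L^\infty}^2\lesssim\|u\|_{H^1}^2$, and since $\|u(t)\|_{H^1}=\|u_0\|_{H^1}$ for smooth solutions, one gets the differential inequality $\big|\tfrac{\ud}{\ud t}u_x(t,q(t))+\tfrac12 u_x(t,q(t))^2\big|\le C\|u_0\|_{H^1}^2$; comparison with the scalar Riccati equation then yields $\|u^n(t)\|_{W^{1,\infty}}\le c\|u_0^n\|_X$ on an interval whose length depends only on $\|u_0\|_X$, and hence $\sup_{[-T,T]}\|u^n(t)\|_X\le c\|u_0^n\|_X$ with $c$ universal — this is the asserted bound.

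Existence then follows by compactness: the differences $w^{n,m}=u^n-u^m$ solve a linear transport equation whose coefficients and source are controlled by the uniform $W^{1,\infty}$ bounds, so a direct $L^2$ energy estimate and Gronwall show $\{u^n\}$ is Cauchy in $C([-T,T];L^2)$; interpolating against the uniform $H^1$ bound gives convergence in $C([-T,T];H^s)$ for every $s<1$, which suffices to pass to the limit in \eqref{nonch}, the nonlinear terms converging in $L^2$ because $\|u^n_x\|_{L^\infty}$ stays bounded. The uniform bounds plus a.e.\ convergence place $u$ in $L^\infty([-T,T];X)$; a commutator/energy argument shows $\|u(t)\|_{H^1}$ is conserved, which combined with weak continuity in $H^1$ upgrades this to $u\in C([-T,T];H^1)$, and since the right-hand side of \eqref{nonch} then lies in $C([-T,T];L^2)$ we obtain $u_t\in C((-T,T);L^2)$. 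Uniqueness is the same $L^2$ estimate applied to the difference of two $Z_T$-solutions with the same datum.

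The hardest part is continuous dependence from balls of $X$ into $Z_T$, which I would handle by the Bona--Smith method: for $u_0^k\to u_0$ in $X$ write $u^k-u=(u^k-u^k_\delta)+(u^k_\delta-u_\delta)+(u_\delta-u)$, where a subscript $\delta$ denotes the solution from the mollified datum $\rho_\delta*(\cdot)$. For fixed $\delta$ the middle term tends to $0$ in $Z_T$ as $k\to\infty$ by continuity of the solution map on smooth data, proved by running the difference estimate in $H^1$ with the derivative loss absorbed via the Bona--Smith balance $\|u_0^k-u^k_{0,\delta}\|_{L^2}\lesssim\delta\|u_0^k\|_{H^1}$ against $\|u^k_\delta\|_{H^2}\lesssim\delta^{-1}\|u_0^k\|_{H^1}$; the outer terms are made small uniformly in $k$ by sending $\delta\to0$, using $\sup_k\|u_0^k-u^k_{0,\delta}\|_{H^1}\to0$. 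The genuinely delicate point — and the one I expect to be the main obstacle — is the $L^\infty([-T,T];W^{1,\infty})$ component of the convergence: because $X$ embeds in no $H^s$ with $s>1$, there is no interpolation route to $W^{1,\infty}$, and one must instead compare the Riccati ODEs obeyed by $u^k_x$ and $u_x$ transported along their respective characteristics, simultaneously controlling the discrepancy between the two flows.
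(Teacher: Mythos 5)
You should first be aware that the paper itself offers no proof of Theorem \ref{c01ill}: it is quoted verbatim from Linares--Ponce--Sideris \cite{lps}, so your sketch can only be measured against that source. Your a priori estimates are the right ones and are the heart of the matter: conservation of the $H^1$ energy together with the Riccati-type bound for $u_x$ along characteristics, obtained from $\partial_{xx}(1-\partial_{xx})^{-1}=-\mathrm{Id}+(1-\partial_{xx})^{-1}$ and $\|p*(u^2+\tfrac12 u_x^2)\|_{L^\infty}\lesssim\|u\|_{H^1}^2$, do give $\sup_{[-T,T]}\|u\|_{X}\leq c\|u_0\|_{X}$ on a time scale depending only on $\|u_0\|_X$. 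The surrounding scheme, however, has genuine gaps. First, mollification does not give $u_0^n\to u_0$ in $X$: smooth functions are not dense in $W^{1,\infty}$, and for the peakon (the very datum this class is designed to contain) $\|\partial_x(\rho_n*u_0)-u_{0x}\|_{L^\infty}$ stays bounded below by half the jump of $u_{0x}$. The approximation step can only use $u_0^n\to u_0$ in $H^1$ plus uniform Lipschitz bounds. Second, the ``direct $L^2$ energy estimate and Gronwall'' does not close: the difference $w=u^n-u^m$ carries the source $\partial_x p*\big(\tfrac12(u^n_x+u^m_x)w_x\big)$, which under uniform $X$-bounds is only $O(\|w_x\|_{L^2})=O(1)$, not $O(\|w\|_{L^2})$; integrating by parts trades this for second derivatives, which are not uniformly bounded. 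So the Cauchy property in $\mathcal{C}([-T,T];L^2)$ needs either a Bona--Smith balancing already at this stage or a replacement by weak/Aubin--Lions compactness, and, more seriously, uniqueness of two rough $Z_T$-solutions cannot be ``the same $L^2$ estimate'': this is exactly the one-derivative loss in stability estimates emphasized in the introduction of the present paper, and the standard repair (used in \cite{lps} and in \cite{yyg}) is a Lagrangian argument, comparing the flows $y$ and the composed quantities $u\circ y$, $u_x\circ y$ as an ODE system.

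Third, the part you call the main obstacle is not merely hard, it is impossible in the form you aim for: the solution map is \emph{not} continuous into the $L^{\infty}([-T,T];W^{1,\infty})$ component in its norm topology. Two peakons $c\,e^{-|x-ct|}$ and $c(1+\varepsilon)e^{-|x-c(1+\varepsilon)t|}$ have data $O(\varepsilon)$-close in $X$, yet for every $t>0$ their derivatives differ by about $2c$ in $L^{\infty}$ between the two crests; this is precisely the failure of continuous dependence in the Lipschitz norm recorded in the remark following Theorem \ref{c01ill} (citing \cite{lps}). Consequently no comparison of the Riccati equations along the two characteristic flows can yield convergence of $u_x$ in $L^{\infty}_{t,x}$, and the final clause of the theorem must be understood in the weaker sense made precise in \cite{lps} (continuity in the $\mathcal{C}([-T,T];H^1)\cap\mathcal{C}^1((-T,T);L^2)$ components together with uniform boundedness, not norm convergence, of the Lipschitz component). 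Your Bona--Smith outline should be reorganized accordingly: it can deliver continuity in $H^1$ (and then in $L^2$ for the time derivative through the equation), but not the $W^{1,\infty}$ convergence you list as the remaining step.
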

\begin{rema}
Indeed, we see from Theorem \ref{c01ill} that the continuous dependence does not hold in $L^{\infty}_T(\mathcal{C}^{0,1}(\mathbb{R}))$. The well-known peakon solution $ce^{-|x-ct|}$ is a counter example (see \cite{lps}). This implies the CH equation is ill-posed in $L^{\infty}_T(\mathcal{C}^{0,1}(\mathbb{R}))$. However, this time the norm inflation can not be appeared in $L^{\infty}_T(\mathcal{C}^{0,1}(\mathbb{R}))$. Indeed, for any $u_0\in \mathcal{C}^{0,1}$, one can find a lifespan $T\approx \frac{1}{\|u_0\|_{\mathcal{C}^{0,1}}}$ and a local unique solution $u(t,x)$ such that $\|u\|_{L^{\infty}_{T}(\mathcal{C}^{0,1})}\leq C\|u_0\|_{\mathcal{C}^{0,1}}$. This means the norm inflation can not be appeared in $L^{\infty}_{T}(\mathcal{C}^{0,1}(\mathbb{R}))$, which implies the norm inflation is just a sufficient but unnecessary conditions for the local ill-posedness.
\end{rema}
From Theorems \ref{wellch} and \ref{illch}, we have known that the local well-posedness and ill-posedness for the Cauchy problem \eqref{ch} of the CH equation have been completed in all critical Besov spaces except for $B^1_{\infty,1}(\mathbb{R})$. Note that the CH equation is the high-frequency limit model of the boundary problem of 2D incompressible Euler equation. For the Cauchy problem of the incompressible Euler equation, it is locally well-posed in $\mathcal{C}_T(B^1_{\infty,1}(\mathbb{R}^{2}))$ \cite{gly} and locally ill-posed in $L^{\infty}_T(\mathcal{C}^{0,1}(\mathbb{R}^{2}))$ (norm inflation) \cite{boli}. Then, the fact
\begin{align}\label{c01}
B^{1+\epsilon}_{\infty,1}\hookrightarrow B^{1}_{\infty,1}\cap B^{1}_{\infty,\infty,1}\hookrightarrow B^{1}_{\infty,1}\hookrightarrow \mathcal{C}^{0,1},\qquad\forall \epsilon>0
\end{align}
implies that $B^{1}_{\infty,1}(\mathbb{R}^{2})$ is the critical Besov space for the incompressible Euler equation where $B^{1}_{\infty,\infty,1}$ is a Banach space equiped with the norm $\|f\|_{B^{1}_{\infty,\infty,1}}=\sup\limits_{j}j\cdot2^{j}\|\Delta_{j}f\|_{L^{\infty}}$. For the CH equation, we see from Theorem \ref{wellch}-\ref{c01ill} that it is locally well-posed in $\mathcal{C}_T(B^{1+\epsilon}_{\infty,1}(\mathbb{R}))$ and locally ill-posed in $C_T(\mathcal{C}^{0,1}(\mathbb{R}))$. Analogously, a nature problem is
\quad\\

\textbf{H}:{\it ~~Whether the problem \eqref{ch} is locally well-posed or not in $B^1_{\infty,1}(\mathbb{R})\cap B^{1}_{\infty,\infty,1}(\mathbb{R})$ or $B^1_{\infty,1}(\mathbb{R})$?}
\quad\\

In this paper, we aim to solve this problem. The main difficulty is the force term $-\partial_x(1-\partial_{xx})^{-1}(u^2+\frac{u_x^2}{2})$ (we will focus on $-\partial_x(1-\partial_{xx})^{-1}(\frac{u_x^2}{2})$ in below since $-\partial_x(1-\partial_{xx})^{-1}(u^2)$ is a lower order term). In $B^{1}_{\infty,1}(\mathbb{R})\cap B^{1}_{\infty,\infty,1}(\mathbb{R})$, we can easliy get the following estimate
$$\|-\partial_x(1-\partial_{xx})^{-1}(u^2+\frac{u_x^2}{2})\|_{B^{1}_{\infty,1}\cap B^{1}_{\infty,\infty,1}}\leq C \|u\|^2_{B^{1}_{\infty,1}\cap B^{1}_{\infty,\infty,1}}.$$
So the local well-posedness for the Cauchy problem of the CH equation in $B^{1}_{\infty,1}(\mathbb{R})\cap B^{1}_{\infty,\infty,1}(\mathbb{R})$ is obvious, see Theorem \ref{well}. But only for $B^{1}_{\infty,1}(\mathbb{R})$, since we don't know whether $B^{0}_{\infty,1}(\mathbb{R})$ is a Banach algebra or not, we can't obtain a priori estimate of $\|-\partial_x(1-\partial_{xx})^{-1}(u^2+\frac{u_x^2}{2})\|_{B^{1}_{\infty,1}}$ (This problem doesn't exist for the incompressible Euler equation, since the divergence free condition makes sense). The good news is, we construct a counter example to present that $B^{0}_{\infty,1}(\mathbb{R})$ is not a Banach algebra, even $\|f^2\|_{B^{0}_{\infty,1}}$ is not bounded when $f\in B^{0}_{\infty,1}(\mathbb{R})$. Note that
for any $u_0\in B^1_{\infty,1}(\mathbb{R})$, we have
$$E_0:=-\partial_x(1-\partial_{xx})^{-1}(\frac{u_{0x}^2}{2})\in B^1_{\infty,1}(\mathbb{R}) \Longleftrightarrow u_{0x}^2\in B^0_{\infty,1}(\mathbb{R}),\quad
-\partial_{xx}(1-\partial_{xx})^{-1}={\rm Id}-(1-\partial_{xx})^{-1}.$$
So we conclude that the CH equation is ill-posed in $\mathcal{C}_T(B^1_{\infty,1}(\mathbb{R}))$, see Theorem \ref{ill}.\\

Now, we present our two main theorems which answer the problem \textbf{H}:
\begin{theo}\label{well}
Let $u_0\in B^{1}_{\infty,1}(\mathbb{R})\cap B^{1}_{\infty,\infty,1}(\mathbb{R})\big(B^{1+\epsilon}_{\infty,1}(\mathbb{R})\hookrightarrow B^{1}_{\infty,1}(\mathbb{R})\cap B^{1}_{\infty,\infty,1}(\mathbb{R}),\ \forall \epsilon>0\big)$. Then there exists a time $T>0$ such that the Cauchy problem \eqref{ch} is locally well-posed in $\mathcal{C}_T(B^{1}_{\infty,1}(\mathbb{R})\cap B^{1}_{\infty,\infty,1}(\mathbb{R}))\cap \mathcal{C}^1_T(B^{0}_{\infty,1}(\mathbb{R})\cap B^{0}_{\infty,\infty,1}(\mathbb{R}))$ in the sense of Hadamard.
\end{theo}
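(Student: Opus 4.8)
The strategy is the standard one for quasilinear transport-type equations, so I will be brief on the routine parts and concentrate on the single point that is genuinely new. I would work with the nonlocal formulation \eqref{nonch}, i.e. regard the CH equation as the transport problem $\partial_t u + u\partial_x u = F(u)$ with the order$-(-1)$ source $F(u):=-\partial_x(1-\partial_{xx})^{-1}\bigl(u^2+\tfrac12 u_x^2\bigr)$, and solve it in the space $E_T:=\mathcal{C}_T\bigl(B^1_{\infty,1}(\mathbb{R})\cap B^1_{\infty,\infty,1}(\mathbb{R})\bigr)$. Concretely I would set $u^0=0$ and let $u^{n+1}$ solve the linear transport equation $\partial_t u^{n+1}+u^n\partial_x u^{n+1}=F(u^n)$, $u^{n+1}|_{t=0}=u_0$, which is solvable in $E_T$ by the classical theory of transport equations in Besov spaces (the estimate at regularity index $1>0$ being directly available in $B^1_{\infty,1}$, and its logarithmically weighted analogue in $B^1_{\infty,\infty,1}$ obtained by carrying the weight $j$ through the usual commutator estimate, exactly as one does for borderline spaces in $2$D Euler theory).

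The heart of the proof is the bilinear estimate $\|F(u)\|_{X}\lesssim \|u\|_{X}^2$ for $X:=B^1_{\infty,1}\cap B^1_{\infty,\infty,1}$, together with its Lipschitz version $\|F(u)-F(v)\|_{X}\lesssim(\|u\|_X+\|v\|_X)\|u-v\|_X$ and the analogous statements one degree lower. Using Bony's decomposition $fg=T_fg+T_gf+R(f,g)$, the two paraproduct terms are bounded by $\|f\|_{L^\infty}\|g\|_X$ and cause no difficulty; for the remainder, writing $c_j:=2^j\|\Delta_j f\|_{L^\infty}$ so that $\|f\|_{B^1_{\infty,1}}=\sum_j c_j$ and $\|f\|_{B^1_{\infty,\infty,1}}=\sup_j jc_j$, one has $\|\Delta_k R(f,g)\|_{L^\infty}\lesssim\sum_{j\gtrsim k}c_jc'_j$ and then invokes the elementary interchange-of-summation bound $\sum_k\sum_{j\geq k}c_jc'_j\lesssim\sum_j jc_jc'_j\leq(\sup_j jc_j)(\sum_j c'_j)$ together with its counterpart $\sup_k k\sum_{j\geq k}c_jc'_j\leq(\sup_j jc_j)(\sum_j c'_j)$. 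This is precisely where the logarithmic weight is indispensable: without it the series diverges, and with it $R(f,g)\in X$ with the desired quadratic bound. Since $\partial_x(1-\partial_{xx})^{-1}$ has symbol of size $O(\min(|\xi|,|\xi|^{-1}))$ and hence gains one derivative at high frequency, combining the above with the fact that $u^2\in X$ and $u_x^2$ lies in the corresponding $B^0$-level space yields $\|F(u)\|_X\lesssim\|u\|_X^2$ and, by the same manipulations on differences, the Lipschitz estimate.

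With these estimates the iteration closes in the usual way. Plugging the transport estimate $\|u^{n+1}(t)\|_X\lesssim\|u_0\|_X\exp\!\bigl(C\!\int_0^t\|u^n\|_X\bigr)+\int_0^t\exp\!\bigl(C\!\int_\tau^t\|u^n\|_X\bigr)\|u^n(\tau)\|_X^2\,d\tau$ into an induction shows that for $T\approx 1/\|u_0\|_X$ the sequence $(u^n)$ stays in a fixed ball of $E_T$; a second application at regularity $B^0_{\infty,\infty}$ (where the one-derivative loss in the transport estimate is affordable, the sequence being bounded one notch higher) shows $(u^n)$ is Cauchy in $\mathcal{C}_T(B^0_{\infty,\infty})$, hence converges to some $u\in L^\infty_T(X)\cap\mathcal{C}_T(B^0_{\infty,\infty})$ solving \eqref{nonch}. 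Upgrading $L^\infty_T(X)$ to $\mathcal{C}_T(X)$ uses the time-continuity part of the transport theory; the equation then gives $\partial_t u=-u\partial_x u+F(u)\in\mathcal{C}_T\bigl(B^0_{\infty,1}\cap B^0_{\infty,\infty,1}\bigr)$, since $u\partial_x u$ and $F(u)$ are controlled in that space by the same Bony analysis, which yields the $\mathcal{C}^1_T$ regularity. Uniqueness and continuous dependence come from the stability estimate for a difference $w=u-v$, which solves a transport equation with source $-w\partial_x v-(F(u)-F(v))$, estimated in $B^0_{\infty,\infty}$ (and, for the continuity statement, via a Bona--Smith regularization of the data); if a borderline logarithmic loss appears in this last step it is absorbed by Osgood's lemma in place of Grönwall.

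The main — indeed the only nontrivial — obstacle is the bilinear estimate $\|F(u)\|_X\lesssim\|u\|_X^2$. It is exactly the failure of this bound in $B^1_{\infty,1}$ alone, equivalently the failure of $B^0_{\infty,1}$ to be a Banach algebra established elsewhere in the paper, that forces the enlargement by the log-weighted component $B^1_{\infty,\infty,1}$, and the whole scheme rests on the interchange-of-summation inequalities above remaining valid in that enlarged space; granting them, every remaining step is a routine adaptation of the Besov-space transport machinery.
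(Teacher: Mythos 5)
Your bilinear estimate is sound and is in fact the same new ingredient the paper uses: the bound $\|u_x^2\|_{B^{0}_{\infty,1}\cap B^{0}_{\infty,\infty,1}}\lesssim \|u_x\|_{B^{0}_{\infty,1}}\|u_x\|_{B^{0}_{\infty,\infty,1}}$ (Lemma \ref{b01}, proved exactly by the interchange-of-summation argument you describe), and with it the a priori estimate for the iterates in $X=B^{1}_{\infty,1}\cap B^{1}_{\infty,\infty,1}$ on a time interval $T\approx 1/\|u_0\|_X$ goes through. The gap is in everything that requires estimating a \emph{difference} of two solutions (or of two iterates) one derivative below $X$: convergence of the scheme in $\mathcal{C}_T(B^{0}_{\infty,\infty})$, uniqueness, and the Bona--Smith continuous dependence. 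For $w=u-v$ the source contains $\partial_x(1-\partial_{xx})^{-1}\big(\tfrac12(u_x+v_x)w_x\big)$, so you must bound $(u_x+v_x)w_x$ in $B^{-1}_{\infty,\infty}$ (or $B^{-1}_{\infty,1}$) with $w_x$ only in $B^{-1}$ and $u_x+v_x$ only at regularity $0$. The paraproducts are fine, but the remainder $R(w_x,u_x+v_x)$ needs total regularity $s_1+s_2>0$ and here $s_1+s_2=-1$; concretely $\sup_k 2^{-k}\|\Delta_k R\|_{L^\infty}\lesssim \sup_k\sum_{j\geq k-3}2^{j-k}\|\Delta_j w\|_{L^\infty}\|\widetilde\Delta_j(u_x+v_x)\|_{L^\infty}$, which is only controlled by $\|w\|_{B^{0}_{\infty,\infty}}\|u\|_{B^{2}_{\infty,1}}$-type quantities. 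This is the well-known loss of one full derivative in the stability estimates for CH at low regularity (the very obstruction that blocked $B^{1+1/p}_{p,1}$ for $p>2$); the logarithmic weight $j$ gives no gain in powers of $2^{j}$, so it does not help here, and Osgood's lemma cannot absorb it because the loss is algebraic, not logarithmic. As written, your Cauchy-sequence step, your uniqueness step, and your continuous-dependence step all fail at this point.

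The paper avoids this entirely by not doing any Eulerian difference estimate: it verifies the hypotheses \eqref{fkl}--\eqref{Fkl} of the abstract result Lemma \ref{lagrabstr} from \cite{yyg}, in which existence comes from uniform bounds plus a compactness argument, and uniqueness/stability are proved after passing to Lagrangian coordinates \eqref{ODE}--\eqref{lagrange}, where the relevant condition \eqref{wideFkl} is a genuine Lipschitz estimate for $\widetilde F(U,y)$ in $W^{1,\infty}$-type norms (the kernel of $(1-\partial_{xx})^{-1}$ being Lipschitz), so no derivative is lost. To repair your argument you would have to replace the difference estimates in $B^{0}_{\infty,\infty}$ by this Lagrangian (or an equivalent characteristics-based) stability argument, and obtain existence by compactness rather than by contraction of the iterates.
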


\begin{theo}\label{ill}
For any $10<N\in\mathbb{N}^{+}$ large enough, there exists a  $u_{0}\in\mathcal{C}^{\infty}(\mathbb{R})$ such that the following hold:
\begin{itemize}
\item [{\rm(1)}] $\|u_{0}\|_{B^{1}_{\infty,1}}\leq CN^{-\frac{1}{10}};$
\item [{\rm(2)}] There is a unique solution $u\in \mathcal{C}_{T}\big(\mathcal{C}^{\infty}(\mathbb{R})\big)$ to the Cauchy problem \eqref{ch} with a time $T\leq\frac{2}{N^\frac{1}{2}};$
\item [{\rm(3)}]
There exists a time $t_{0}\in[0,T]$ such that $\|u(t_{0})\|_{B^{1}_{\infty,1}}\geq \ln N$.
\end{itemize}
\end{theo}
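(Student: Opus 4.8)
The plan is to exhibit a highly-oscillatory, compactly-localized initial datum whose $B^1_{\infty,1}$ norm is small but whose nonlinear interaction through the term $-\partial_x(1-\partial_{xx})^{-1}(u_x^2/2)$ produces, after a very short time, a single low-frequency bump of size $\gtrsim\ln N$. First I would fix a large integer $N$ and a lacunary set of frequencies $2^j$ with $j$ ranging over roughly $\ln N$ consecutive (well-separated) scales in a window near $2^N$, and set
\begin{equation*}
u_0(x)=N^{-1/10}\sum_{j}\, 2^{-j}\,\phi(x)\cos(2^j x),
\end{equation*}
where $\phi\in\mathcal C_c^\infty$ is a fixed bump; the prefactors $2^{-j}$ make each Littlewood–Paley piece of $u_{0x}$ of size $\sim N^{-1/10}$, and summing over $\sim\ln N$ blocks gives $\|u_0\|_{B^1_{\infty,1}}\lesssim N^{-1/10}\ln N\lesssim N^{-1/20}$ (or one trims the number of blocks to make the bound $CN^{-1/10}$ exactly, as in (1)). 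The key point — the same mechanism behind the counterexample to $B^0_{\infty,1}$ being an algebra — is that $u_{0x}^2$ contains a \emph{nonoscillatory} part: each term $\cos^2(2^j x)=\tfrac12+\tfrac12\cos(2^{j+1}x)$ contributes $\tfrac12$ to a low-frequency piece, and the $\sim\ln N$ blocks add \emph{constructively} there, so $u_{0x}^2$ has a $\Delta_0$-component of size $\sim (N^{-1/10})^2\cdot\ln N$; the quadratic cancels the $2^{-2j}$ decay so this low-frequency mass is genuinely $\gtrsim N^{-1/5}\ln N$, which is $\gg$ than $\|u_0\|_{B^1_{\infty,1}}^2$ would naively suggest.

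Next I would run the solution on the (short) interval $[0,T]$ with $T\le 2N^{-1/2}$. Since the datum is in $H^\infty$, Theorem \ref{illch}'s construction (or the classical Kato theory) gives a smooth solution on an interval whose length is controlled by a higher Sobolev norm; one checks $T\gtrsim N^{-1/2}$ is admissible because the $H^s$ norm of $u_0$ for the relevant $s$ is $\lesssim N^{s-1-1/10}$, forcing a lifespan that shrinks but no faster than a power of $N$, and in particular one can take $T\sim N^{-1/2}$. On that interval I would use the equation in the form \eqref{nonch}, $u_t+uu_x=-\partial_x(1-\partial_{xx})^{-1}(u^2+u_x^2/2)=:F(u)$, and perform a Picard-type / Duhamel expansion $u(t)=u_0-\int_0^t(uu_x)(s)\,ds+\int_0^t F(u)(s)\,ds$. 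The transport part $uu_x$ and the $u^2$ part are \emph{lower order} and one bounds their contribution to $\|u(t)-u_0\|_{B^1_{\infty,1}}$ (or to a slightly weaker norm) by $Ct\,\|u\|^{2}_{\text{good space}}$; the point is to show that over $[0,t_0]$ with $t_0\sim N^{-1/2}$ the dominant effect is the time-integrated low-frequency growth produced by $-\tfrac12\partial_x(1-\partial_{xx})^{-1}(u_x^2)$ applied to $u_0$ itself, i.e. $\int_0^{t_0}E_0\,ds=t_0 E_0$ with $E_0=-\tfrac12\partial_x(1-\partial_{xx})^{-1}(u_{0x}^2)$.

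The crux of the argument is the lower bound: I would estimate $\|u(t_0)\|_{B^1_{\infty,1}}\ge \|t_0 E_0\|_{B^1_{\infty,1}}-\|u_0\|_{B^1_{\infty,1}}-(\text{error from the expansion})$. From $-\partial_{xx}(1-\partial_{xx})^{-1}=\mathrm{Id}-(1-\partial_{xx})^{-1}$ one gets that $\|E_0\|_{B^1_{\infty,1}}\gtrsim \|u_{0x}^2\|_{B^0_{\infty,1}}-\|u_{0x}^2\|_{B^{-1}_{\infty,1}}$, and the low-frequency block computed above gives $\|u_{0x}^2\|_{B^0_{\infty,1}}\gtrsim N^{-1/5}\ln N$ while the oscillatory blocks of $u_{0x}^2$ (near $2^{j+1}$ and cross terms near $2^j\pm 2^k$) are controlled and, after being hit by $(1-\partial_{xx})^{-1}$, negligible. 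Hence $\|t_0 E_0\|_{B^1_{\infty,1}}\gtrsim N^{-1/2}\cdot N^{-1/5}\ln N$ — which is \emph{not yet} $\ln N$, so one must tune the amplitude: replacing $N^{-1/10}$ by an amplitude $a_N$ and the lifespan $t_0\sim a_N^{-1}$ (dictated by the size of the data) one finds the gain is $\sim t_0 a_N^2\,\#\{\text{blocks}\}\sim a_N\ln N$; to land on exactly the statement one takes the amplitude comparable to a constant and packs $\sim\ln N$ blocks, or, as the theorem is phrased, accepts $\|u_0\|_{B^1_{\infty,1}}\le CN^{-1/10}$ by choosing the frequency window of width $\sim N^{1/10}\cdot(\text{something})$ appropriately — the bookkeeping here is exactly the delicate point and is where I expect to spend most of the effort. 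The main obstacle, then, is not the construction but making the three competing quantities line up simultaneously: (i) $\|u_0\|_{B^1_{\infty,1}}$ small (needs few blocks / small amplitude / decaying prefactors), (ii) the lifespan $T\le 2N^{-1/2}$ not too short (needs the high Sobolev norms not too large, i.e. frequencies not packed too high relative to the amplitude), and (iii) $t_0\|E_0\|_{B^1_{\infty,1}}\ge \ln N$ (needs the constructive low-frequency resonance to beat $t_0^{-1}$). Controlling the error terms $\int_0^{t_0}(uu_x+\partial_x(1-\partial_{xx})^{-1}u^2)\,ds$ and the difference $\int_0^{t_0}(E(u(s))-E_0)\,ds$ in $B^1_{\infty,1}$ — which one does by a bootstrap keeping $u(s)$ bounded in $B^1_{\infty,1}\cap B^1_{\infty,\infty,1}$ (where, by Theorem \ref{well}, the flow is well-behaved) on the short interval — is routine once the scales are fixed, and then (3) follows with $t_0$ the time at which the lower bound is evaluated.
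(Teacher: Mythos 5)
The decisive gap is your choice of initial data; the ``bookkeeping'' you defer is exactly where the construction breaks, and it cannot be fixed by tuning. Write $a$ for the per-block amplitude and $K$ for the number of lacunary blocks ($a=N^{-1/10}$, $K\sim\ln N$ in your ansatz $u_0=a\sum_j2^{-j}\phi\cos(2^jx)$). Near $x=0$ all the cosines interfere constructively, so $\|u_{0x}\|_{L^\infty}\sim\|u_{0x}\|_{B^{0}_{\infty,1}}\sim aK\sim\|u_0\|_{B^1_{\infty,1}}$, while the nonoscillatory part of $u_{0x}^2$ is a single smooth bump of height $\sim a^2K$, whose $B^{0}_{\infty,1}$ norm is comparable to its $L^\infty$ norm; hence $\|u_{0x}^2\|_{B^{0}_{\infty,1}}\lesssim\|u_{0x}\|_{L^\infty}\|u_{0x}\|_{B^{0}_{\infty,1}}$ with both factors of size $\|u_0\|_{B^1_{\infty,1}}$ — there is no genuine failure of the algebra inequality in your example. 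Since the observation time is limited by the Lipschitz lifespan, $t_0\lesssim1/\|u_{0x}\|_{L^\infty}\sim1/(aK)$, you get $t_0\|E_0\|_{B^1_{\infty,1}}\lesssim a^2K\cdot(aK)^{-1}=a$, which must be $\leq CN^{-1/10}$ if (1) is to hold; so within this family of data items (1), the lifespan constraint and (3) can never hold simultaneously, no matter how amplitude, number of blocks or prefactors are tuned. (A secondary error: you propose to get $T\sim N^{-1/2}$ from $H^s$/Kato theory with ``$\|u_0\|_{H^s}\lesssim N^{s-1-1/10}$'', but your frequencies are $\sim2^N$, so the $H^s$ norm is exponentially large in $N$ and that lifespan is exponentially small; the lifespan must come from the $\mathcal{C}^{0,1}$ theory of Theorem \ref{c01ill}, which depends only on $\|u_0\|_{\mathcal{C}^{0,1}}\sim N^{-1/10}$.)

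The missing idea is to use a profile whose $B^{0}_{\infty,1}$ norm is far larger than its $L^\infty$ norm and to modulate it by a \emph{single} high frequency. The paper takes $u_0=-N^{-1/10}(1-\partial_{xx})^{-1}\partial_x\big[\cos(2^{N+5}x)\,(1+N^{-1/10}S_Nh)\big]$ with $h$ the Heaviside function: $S_Nh$ satisfies $\|S_Nh\|_{L^\infty}\sim1$ but $\|S_Nh\|_{B^{0}_{\infty,1}}\sim N$, because its $N$ dyadic blocks all have comparable $L^\infty$ size. After modulation the product lives in one dyadic annulus near $2^{N+5}$, so $\|u_{0x}\|_{B^{0}_{\infty,1}}\lesssim N^{-1/10}$; but squaring demodulates via $\cos^2=\tfrac12(1+\cos2\cdot)$ and re-exposes $S_Nh$ spread over $N$ low-frequency blocks, giving $\|u_{0x}^2\|_{B^{0}_{\infty,1}}\gtrsim N^{3/5}$ — a gain of order $N^{9/10}$ over $\|u_{0x}\|_{B^{0}_{\infty,1}}^2$, versus the factor $K\sim\ln N$ your data can produce. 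Then $t_0\|E_0\|_{B^1_{\infty,1}}\sim N^{-1/2}N^{3/5}=N^{1/10}\gg\ln N$, and the error terms can indeed be beaten; but even here your closing bootstrap ``$u(s)$ bounded in $B^1_{\infty,1}\cap B^1_{\infty,\infty,1}$'' needs care, since $\|u_0\|_{B^1_{\infty,\infty,1}}\sim N^{9/10}$ is large: the paper argues by contradiction (assume $\sup_{[0,2N^{-1/2}]}\|u_x\|_{B^{0}_{\infty,1}}<\ln N$), proves the quantitative bound $\|u_x\|_{L^\infty_T(B^{0}_{\infty,\infty,1})}\lesssim N^{9/10}$ from the transport structure, and checks via the commutator estimates of Lemma \ref{rj} that all remainders are $o\big(t_0\|E_0\|_{B^1_{\infty,1}}\big)$.
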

\begin{rema}
From Theorem \ref{c01ill} and Theorem \ref{ill}, we know that the Cauchy problem \eqref{ch} is ill-posed in $\mathcal{C}^{0,1}(\mathbb{R})$ and $B^1_{\infty,1}(\mathbb{R})$ respectively. The difference is,  in $B^1_{\infty,1}(\mathbb{R})$, we present the norm inflation (the norm inflation implies the discontinuous dependence) and hence the ill-posedness for the Cauchy problem \eqref{ch}, which means that the Cauchy problem \eqref{ch} is ill-posed in $B^1_{\infty,1}(\mathbb{R})$ due to the norm inflation; in $\mathcal{C}^{0,1}(\mathbb{R})$, there is no norm inflation but the continuous dependence is still broken, which means that the Cauchy problem \eqref{ch} is ill-posed in $\mathcal{C}^{0,1}(\mathbb{R})$ because of the discontinuous dependence.
\end{rema}
\begin{rema}
Since the incompressible Euler equation is locally well-posed in $\mathcal{C}_T(B^1_{\infty,1}(\mathbb{R}^{2}))$ \cite{gly} and locally ill-posed in $L^{\infty}_T(\mathcal{C}^{0,1}(\mathbb{R}^{2}))$ \cite{boli} (norm inflation), but for the CH equation we obtain the local ill-posedness in $\mathcal{C}_T(B^1_{\infty,1}(\mathbb{R}))$ (norm inflation) and $L^{\infty}_T(\mathcal{C}^{0,1}(\mathbb{R}))$ (only the continuous dependence is broken). This interesting fact illustrates that there is a nature difference between the two equations.
\end{rema}

In sum, from Theorem \ref{wellch}--Theorem \ref{ill} we know that $B^{1}_{\infty,1}(\mathbb{R})$ is the critical Besov space for the CH equation, and the local well-posedness and ill-posedness for the CH equation in all critical Besov spaces have been completed, see the following
\begin{align*}
\underset{}{\overset{\text{well-posed}}{B^{1+\frac{1}{p}}_{p,1}(\mathbb{R})}~\text{with}~p<\infty}~\hookrightarrow\underset{\text{norm inflation}}{\overset{\text{ill-posed}}{B^{1}_{\infty,1}(\mathbb{R})}}\hookrightarrow\underset{\text{no norm inflation}}{\overset{\text{ill-posed}}{\mathcal{C}^{0,1}(\mathbb{R}).}}
\end{align*}
$$\text{and}$$
\begin{align*} 
\underset{}{\overset{\text{well-posed}}{B^{1+\epsilon}_{\infty,1}(\mathbb{R})} }~\text{with any}~\epsilon>0~\hookrightarrow\underset{\text{norm inflation}}{\overset{\text{ill-posed}}{B^{1}_{\infty,1}(\mathbb{R})}}\hookrightarrow\underset{\text{no norm inflation}}{\overset{\text{ill-posed}}{\mathcal{C}^{0,1}(\mathbb{R}).}}
\end{align*}

Finally, by the proof of Theorem \ref{ill}, we prove a norm inflation by constructing an special initial data $u_{0}$ satisfies $u_0\in B^1_{\infty,1}(\mathbb{R})$ but $u^2_{0x}\notin B^{0}_{\infty,1}(\mathbb{R})$, we now show that this condition is necessary.
\begin{theo}\label{non}
Let $u_{0}\in B^{1}_{\infty,1}(\mathbb{R})$. If $u^2_{0x}\in B^{0}_{\infty,1}(\mathbb{R})$, then there exists a lifespan $T>0$ such that the Cauchy problem \eqref{ch} has a unique solution $u(t,x)\in \mathcal{C}_T(B^{1}_{\infty,1}(\mathbb{R}))\cap  \mathcal{C}^{1}_T(B^{0}_{\infty,1}(\mathbb{R}))$. This means the norm inflation will not occur.
\end{theo}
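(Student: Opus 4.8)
The plan is to rewrite \eqref{nonch} as a \emph{coupled transport system} for the pair $(u,E)$, where
$E:=-\partial_x(1-\partial_{xx})^{-1}\big(u^2+\tfrac12 u_x^2\big)$
is the forcing term. Since $-\partial_{xx}(1-\partial_{xx})^{-1}={\rm Id}-(1-\partial_{xx})^{-1}$, the hypothesis $u_{0x}^2\in B^0_{\infty,1}$ is \emph{equivalent} to $E(0)\in B^1_{\infty,1}$. Writing $g:=u^2+\tfrac12 u_x^2$ and using $u_t=-uu_x+E$, $u_{xt}=-u_x^2-uu_{xx}+E_x$ together with the identity $E_x=g-(1-\partial_{xx})^{-1}g$, a direct computation gives the transport equation
$g_t+ug_x=u^2u_x+2uE-\tfrac12 u_x^3-u_x(1-\partial_{xx})^{-1}g$,
and passing to $E=-\partial_x(1-\partial_{xx})^{-1}g$ one finds
\begin{equation*}
E_t+uE_x=(1-\partial_{xx})^{-1}(ug)-u(1-\partial_{xx})^{-1}g-\partial_x(1-\partial_{xx})^{-1}\big(2u^2u_x+2uE-u_x(1-\partial_{xx})^{-1}g\big).
\end{equation*}
The crucial observation is that the cubic term $u_x^3$ — exactly the quantity responsible for the norm inflation of Theorem \ref{ill} and for the failure of $B^0_{\infty,1}$ to be a Banach algebra — \emph{cancels} in the equation for $E$; every remaining dangerous product sits inside $(1-\partial_{xx})^{-1}$ (gain of two derivatives) or inside $\partial_x(1-\partial_{xx})^{-1}$ (gain of one derivative).

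First I would close an a priori estimate for $Y(t):=\|u(t)\|_{B^1_{\infty,1}}+\|E(t)\|_{B^1_{\infty,1}}$. Since $u_x\in B^0_{\infty,1}\hookrightarrow L^\infty$, the transport estimate in the critical space $B^1_{\infty,1}$ applies to both equations, with loss measured by $\int_0^t\|u_x\|_{B^0_{\infty,1}}d\tau\le\int_0^tY\,d\tau$. To bound the right-hand sides I would use that $B^1_{\infty,1}$ is a Banach algebra, that $B^0_{\infty,1}$ is a module over $B^1_{\infty,1}$, the smoothing of $(1-\partial_{xx})^{-1}$ and $\partial_x(1-\partial_{xx})^{-1}$, and — most importantly — the identity $g=E_x+(1-\partial_{xx})^{-1}g$ combined with $\|(1-\partial_{xx})^{-1}g\|_{B^1_{\infty,1}}\lesssim\|g\|_{L^\infty}\lesssim\|u\|^2_{B^1_{\infty,1}}$ (via $B^1_{\infty,1}\hookrightarrow \mathcal{C}^{0,1}$), which yields $\|g\|_{B^0_{\infty,1}}\lesssim\|E\|_{B^1_{\infty,1}}+\|u\|^2_{B^1_{\infty,1}}$. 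All terms are then controlled by $\lesssim\|u\|_{B^1_{\infty,1}}\|E\|_{B^1_{\infty,1}}+\|u\|^3_{B^1_{\infty,1}}\lesssim Y^2+Y^3$, so a Gronwall argument produces a time $T>0$ depending only on $Y(0)$ on which $Y$ stays bounded. An actual solution of the system is then obtained by a Friedrichs/mollification scheme together with a compactness argument (using that $u_t=E-uu_x$ and $E_t$ are bounded in $\mathcal{C}_T(B^0_{\infty,1})$, which also delivers the $\mathcal{C}^1_T(B^0_{\infty,1})$ regularity) rather than by contraction, since the usual one-derivative loss of the Camassa--Holm equation obstructs a fixed-point argument; this is precisely the framework of \cite{yyg}.

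Next I would verify that any such $(u,E)$ solves \eqref{ch}, i.e. that $E=F(u):=-\partial_x(1-\partial_{xx})^{-1}(u^2+\tfrac12u_x^2)$. Setting $R:=E-F(u)$ (so $R|_{t=0}=0$), repeating the computation above — now \emph{without} the relation $E_x=g-(1-\partial_{xx})^{-1}g$, which holds for $F(u)$ only — gives $R_t+uR_x=\partial_x(1-\partial_{xx})^{-1}(u_xR_x)$. Writing $\partial_x(1-\partial_{xx})^{-1}(u_xR_x)=(1-\partial_{xx})^{-1}(u_xR)-u_xR-\partial_x(1-\partial_{xx})^{-1}(u_{xx}R)$ and estimating $R$ one regularity level below, one gets $R\equiv0$ by Gronwall; this is the step where the hypothesis $u_{0x}^2\in B^0_{\infty,1}$ — propagated to $u_x^2\in\mathcal{C}_T(B^0_{\infty,1})$ in the previous step — is genuinely used. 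Uniqueness in $\mathcal{C}_T(B^1_{\infty,1})\cap\mathcal{C}^1_T(B^0_{\infty,1})$ follows from a stability estimate for the difference $w=u_1-u_2$: the difference of the forcing terms is $-\partial_x(1-\partial_{xx})^{-1}\big((u_1+u_2)w+\tfrac12(u_{1x}^2-u_{2x}^2)\big)$, and since $(u_1+u_2)w\in B^1_{\infty,1}$ while $u_{1x}^2-u_{2x}^2\in B^0_{\infty,1}$ by the propagated extra regularity, the estimate closes (alternatively, one may invoke the known $W^{1,\infty}$-uniqueness for \eqref{ch} using $B^1_{\infty,1}\hookrightarrow W^{1,\infty}$). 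Finally, the bound $\sup_{[0,T]}\|u(t)\|_{B^1_{\infty,1}}\le CY(0)\lesssim\|u_0\|_{B^1_{\infty,1}}+\|u_{0x}^2\|_{B^0_{\infty,1}}<\infty$ shows that no norm inflation can occur.

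The main obstacle is the construction-and-uniqueness at this critical regularity: the one-derivative loss of the Camassa--Holm equation rules out a naive iteration, so existence must rest on uniform bounds plus compactness, and both the consistency identity $R\equiv0$ and uniqueness have to be extracted from a low-regularity stability estimate that exploits exactly the surplus information $u_x^2\in B^0_{\infty,1}$ (equivalently, one may carry these arguments out in Lagrangian coordinates, where the troublesome cubic term is absent from the outset, as in \cite{yyg}).
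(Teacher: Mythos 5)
Your proposal is correct, and its analytic core coincides with the paper's: rewrite \eqref{ch} as a coupled transport system for $u$ and the forcing term $E$, observe that the evolution equation for $E$ contains no product of type $u_x^2$ (or $u_x^3$) outside the smoothing operators $(1-\partial_{xx})^{-1}$, $\partial_x(1-\partial_{xx})^{-1}$, and close a Gronwall estimate for $\|u\|_{B^{1}_{\infty,1}}+\|E\|_{B^{1}_{\infty,1}}$ on a time $T$ determined by $\|u_0\|_{B^{1}_{\infty,1}}$ and $\|E_0\|_{B^{1}_{\infty,1}}\approx\|u_{0x}^2\|_{B^{0}_{\infty,1}}$; this is exactly the paper's pair \eqref{ub1}--\eqref{ee}. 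The packaging differs, though. The paper first invokes Theorem \ref{c01ill} to get the unique solution $u\in L^{\infty}_T(\mathcal{C}^{0,1})$, defines $E:=-(1-\partial_{xx})^{-1}\partial_x(\tfrac{u_x^2}{2})$ directly from that solution (treating $-(1-\partial_{xx})^{-1}\partial_x(u^2)$ separately as a lower-order term rather than folding $u^2$ into $E$ as you do), and then only has to \emph{propagate} the $B^{1}_{\infty,1}$ regularity along equation \eqref{E}; since $E$ is by construction equal to $F(u)$ and uniqueness already holds in the larger class $\mathcal{C}^{0,1}\supset B^{1}_{\infty,1}$, neither your consistency step $R\equiv0$ nor a separate stability/uniqueness estimate is needed, and the remaining construction details are delegated to the scheme of \cite{yyg}. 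Your route, building $(u,E)$ from scratch by mollification and compactness, is more self-contained and makes the cancellation of the cubic term $u_x^3$ explicit, but it is heavier: the consistency identity for $R=E-F(u)$ and the uniqueness argument are extra steps the paper avoids (your parenthetical fallback to the known $W^{1,\infty}$ uniqueness is precisely the paper's shortcut). Both yield the same conclusion, including the final bound $\sup_{[0,T]}\|u(t)\|_{B^{1}_{\infty,1}}\lesssim\|u_0\|_{B^{1}_{\infty,1}}$ excluding norm inflation.
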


The rest of our paper is as follows. In Section 2, we introduce some preliminaries which will be used in the sequel. In Section 3, we establish the well-posedness for the Cauchy problem \eqref{ch} in $B^{1}_{\infty,1}(\mathbb{R})\cap B^{1}_{\infty,\infty,1}(\mathbb{R})$.
In Section 4, we first present the norm inflation and hence the ill-posedness for the Cauchy problem \eqref{ch} in $B^{1}_{\infty,1}(\mathbb{R})$ by  choosing an special initial data $u_0\in B^{1}_{\infty,1}(\mathbb{R})$ but $u^2_{0x}\notin B^{0}_{\infty,1}(\mathbb{R})$ (an example implies $B^{0}_{\infty,1}(\mathbb{R})$ is not a Banach algebra). Then, we prove that this condition is necessary. That is, if $u^2_{0x}\in B^{0}_{\infty,1}(\mathbb{R})$ holds, the Camassa-Holm equation has a unique solution $u(t,x)\in \mathcal{C}_T(B^{1}_{\infty,1}(\mathbb{R}))\cap \mathcal{C}^{1}_T(B^{0}_{\infty,1}(\mathbb{R}))$ and the norm inflation will not occur.

\section{Preliminaries}
\par
In this section, we recall some basic properties about the Littlewood-Paley theory and Besov spaces, which can be found in \cite{book}.

\begin{prop}[Bernstein's inequalities, see \cite{book}]\label{berns}
Let $\mathfrak{B}$ be a ball and $\mathfrak{C}$ be an annulus. A constant $C>0$ exists such that for all $k\in\mathbb{N},\ 1\leq  p\leq q\leq\infty$, and any function $f\in L^{p}(\mathbb{R})$, we have
\begin{align*}
&{\rm Supp}{(\mathcal{F}{f})}\subset\lambda\mathfrak{B}\Rightarrow\|D^{k}f\|_{L^{q}}=\sup\limits_{|\alpha|=k}\|\partial^{\alpha}f\|_{L^{q}}\leq C^{k+1}\lambda^{k+d(\frac1p-\frac1q)}\|f\|_{L^{p}},\\
&{\rm Supp}{(\mathcal{F}{f})}\subset\lambda\mathfrak{C}\Rightarrow C^{-k-1}\lambda^{k}\|f\|_{L^{p}}\leq\|D^{k}f\|_{L^{q}}\leq C^{k+1}\lambda^{k}\|f\|_{L^{p}}.
\end{align*}	
\end{prop}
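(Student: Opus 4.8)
The plan is to realize each partial derivative as a Fourier multiplier that is \emph{compactly supported}, thanks to the spectral localization of $f$, to turn this multiplier into a convolution kernel, and then to invoke Young's convolution inequality; the entire gain of integrability between $L^p$ and $L^q$ will come from the $L^r$-norm of that kernel, while the powers of $\lambda$ are tracked by a single rescaling. Concretely, I would first reduce to $\lambda=1$: setting $f_\lambda(x)=f(\lambda x)$ sends $\mathrm{Supp}(\mathcal{F}f)\subset\lambda\mathfrak{B}$ to $\mathrm{Supp}(\mathcal{F}f_\lambda)\subset\mathfrak{B}$, and each $\partial^\alpha$ together with each $L^q$-norm picks up an explicit power of $\lambda$; collecting these powers reproduces exactly the factor $\lambda^{k+d(\frac1p-\frac1q)}$ on the ball (resp. $\lambda^{k}$ on the annulus). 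So it suffices to prove the two estimates for $\lambda=1$ with a constant of the form $C^{k+1}$.

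For the first (ball) inequality, fix $\phi\in C_c^\infty(\mathbb{R}^d)$ with $\phi\equiv1$ on a neighbourhood of $\mathfrak{B}$. Since $\mathcal{F}f$ is supported in $\mathfrak{B}$ one has $\widehat{\partial^\alpha f}(\xi)=(i\xi)^\alpha\phi(\xi)\,\widehat f(\xi)$ for $|\alpha|=k$, hence $\partial^\alpha f=g_\alpha\star f$ with $g_\alpha:=\mathcal{F}^{-1}\big((i\cdot)^\alpha\phi\big)$. Young's inequality then gives $\|\partial^\alpha f\|_{L^q}\le\|g_\alpha\|_{L^r}\|f\|_{L^p}$ with $\tfrac1r=1-(\tfrac1p-\tfrac1q)$, so everything reduces to the kernel bound $\|g_\alpha\|_{L^r}\le C^{k+1}$. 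I would obtain this by estimating $\|g_\alpha\|_{L^1}$ and $\|g_\alpha\|_{L^\infty}$ and interpolating: the $L^\infty$-bound is immediate from $\|g_\alpha\|_{L^\infty}\le\|\xi^\alpha\phi\|_{L^1}$, and the $L^1$-bound follows from the decay identity $(1+|x|^2)^N g_\alpha(x)=\mathcal{F}^{-1}\big((1-\Delta_\xi)^N[(i\xi)^\alpha\phi]\big)(x)$ for $N$ large. On the fixed support $\{|\xi|\le R\}$ of $\phi$ one has $|\xi^\alpha|\le R^k$, and the Leibniz expansion of $(1-\Delta_\xi)^N[\xi^\alpha\phi]$ produces only geometrically many terms, each bounded by $C^k$; this is precisely what yields the uniform geometric constant.

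For the second (annulus) inequality the upper bound is a special case of the first, since $\mathfrak{C}$ is contained in a ball. For the lower bound I would exploit that $0\notin\mathfrak{C}$, so $|\xi|\ge c>0$ there and the multiplier $|\xi|^{-2k}$ is smooth on the support. Picking $\tilde\phi\in C_c^\infty(\mathbb{R}^d\setminus\{0\})$ with $\tilde\phi\equiv1$ near $\mathfrak{C}$ and using the multinomial identity $|\xi|^{2k}=\sum_{|\alpha|=k}\binom{k}{\alpha}\xi^{2\alpha}$, I would write $f=\sum_{|\alpha|=k}\binom{k}{\alpha}i^{-k}h_\alpha\star\partial^\alpha f$ with $h_\alpha:=\mathcal{F}^{-1}\big(\tilde\phi(\xi)\xi^\alpha|\xi|^{-2k}\big)$, so that Young's inequality gives $\|f\|_{L^p}\le\big(\sum_{|\alpha|=k}\binom{k}{\alpha}\|h_\alpha\|_{L^1}\big)\sup_{|\alpha|=k}\|\partial^\alpha f\|_{L^p}$. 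The same $L^1$-kernel estimate as above, together with $\sum_{|\alpha|=k}\binom{k}{\alpha}=d^k$, controls the prefactor by $C^{k+1}$ and proves $\|f\|_{L^p}\le C^{k+1}\|D^kf\|_{L^p}$ at $\lambda=1$; rescaling then restores the factor $\lambda^{-k}$, and the mixed-exponent case is recovered by combining with the first inequality.

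The routine parts are Young's inequality and the scaling bookkeeping; the one genuinely delicate point is that all constants must be shown to grow \emph{at most geometrically} in $k=|\alpha|$. This is where the argument really lives: one must verify that differentiating $\xi^\alpha\phi$ (or $\xi^\alpha|\xi|^{-2k}\tilde\phi$) a bounded number of times, on a support where $|\xi|$ is pinched between two fixed constants, never produces factorial-type blow-up in $k$ but only powers of a fixed constant, so that the $L^1$/$L^\infty$ kernel norms stay below $C^{k+1}$ uniformly in $k$. I expect this combinatorial control of the Leibniz expansion to be the main obstacle.
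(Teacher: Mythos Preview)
The paper does not give its own proof of this proposition; it is quoted verbatim from \cite{book} as a preliminary, so there is nothing to compare against beyond the standard argument in that reference. Your sketch is exactly that standard proof (cutoff multiplier, Young's inequality, scaling, and inversion of $|\xi|^{2k}$ on the annulus) and is correct; the only slip is the direction of the dilation in your reduction---you want $f(x/\lambda)$ rather than $f(\lambda x)$ to send the spectral support from $\lambda\mathfrak{B}$ back to $\mathfrak{B}$---and you should note that the annulus line as printed is meant with $q=p$ (this is how it appears in \cite{book}), so your ``upper bound is a special case of the first'' remark is fine once the exponents are matched.
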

Let $\chi$ and $\varphi$ be a radical, smooth, and valued in the interval $[0,1]$, belonging respectively to $\mathcal{D}(\mathcal{B})$ and $\mathcal{D}(\mathcal{C})$, where $\mathcal{B}=\{\xi\in\mathbb{R}^d:|\xi|\leq\frac 4 3\},\ \mathcal{C}=\{\xi\in\mathbb{R}^d:\frac 3 4\leq|\xi|\leq\frac 8 3\}$.
Denote $\mathcal{F}$ by the Fourier transform and $\mathcal{F}^{-1}$ by its inverse.

The nonhomogeneous dyadic blocks $\Delta_j$ and low-frequency cut-off operators $S_j$ are defined as
\begin{equation*}
\left\{\begin{array}{ll}
\Delta_j u=0,\ \text{for $j\leq -2;$}\
\Delta_{-1} u=\mathcal{F}^{-1}(\chi\mathcal{F}u);\
\Delta_j u=\mathcal{F}^{-1}(\varphi(2^{-j}\cdot)\mathcal{F}u),\ \text{for $j\geq0,$}\\
S_j u=\sum\limits_{j'<j}\Delta_{j'}u.
\end{array}\right.
\end{equation*}

Let $s\in\mathbb{R},\ 1\leq p,\ r\leq\infty.$ The nonhomogeneous Besov spaces $B^s_{p,r}(\mathbb{R}^d)$ are defined by
\begin{align*}
B^s_{p,r}=B^s_{p,r}(\mathbb{R}^d)=\Big\{u\in S'(\mathbb{R}^d):\|u\|_{B^s_{p,r}}=\big\|(2^{js}\|\Delta_j u\|_{L^p})_j \big\|_{l^r(\mathbb{Z})}<\infty\Big\}.
\end{align*}

The nonhomogeneous Bony's decomposition is defined by
$uv=T_{u}v+T_{v}u+R(u,v)$ with
$$T_{u}v=\sum_{j}S_{j-1}u\Delta_{j}v,\ \ R(u,v)=\sum_{j}\sum_{|j'-j|\leq 1}\Delta_{j}u\Delta_{j'}v.$$



\begin{prop}[See \cite{book}]\label{Besov} Let $s,\ s_{1},\ s_{2}\in\mathbb{R}$ and $1\leq p,\ p_{1},\ p_{2},\ r,\ r_{1},\ r_{2}\leq+\infty$.
\begin{itemize}
\item [{\rm(1)}] If $p_1\leq p_2$ and $r_1\leq r_2$, then $ B^s_{p_1,r_1}\hookrightarrow B^{s-d(\frac 1 {p_1}-\frac 1 {p_2})}_{p_2,r_2}$.
\item [{\rm(2)}] If $s_1<s_2$ and $r_1\leq r_2$, then the embedding $B^{s_2}_{p,r_2}\hookrightarrow B^{s_1}_{p,r_1}$ is locally compact.
\item [{\rm(3)}] Let $m\in\mathbb{R}$ and $f$ be a $S^m$-mutiplier $($i.e. f is a smooth function and satisfies that $\forall\ \alpha\in\mathbb{N}^d$,
$\exists\ C=C(\alpha)$ such that $|\partial^{\alpha}f(\xi)|\leq C(1+|\xi|)^{m-|\alpha|},\ \forall\ \xi\in\mathbb{R}^d)$.
Then the operator $f(D)=\mathcal{F}^{-1}(f\mathcal{F})$ is continuous from $B^s_{p,r}$ to $B^{s-m}_{p,r}$.
\end{itemize}

\end{prop}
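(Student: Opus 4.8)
The plan is to establish the three assertions separately, as each rests on a different elementary ingredient, with only the compactness in (2) demanding genuine work. For (1), I would argue one dyadic block at a time. Every $\Delta_j u$ has its Fourier transform supported in a ball of radius comparable to $2^j$, so the first Bernstein inequality of Proposition~\ref{berns} (taken with $k=0$) gives $\|\Delta_j u\|_{L^{p_2}}\le C2^{jd(\frac1{p_1}-\frac1{p_2})}\|\Delta_j u\|_{L^{p_1}}$. Multiplying through by $2^{j(s-d(\frac1{p_1}-\frac1{p_2}))}$ turns this into the block bound $2^{j(s-d(\frac1{p_1}-\frac1{p_2}))}\|\Delta_j u\|_{L^{p_2}}\le C2^{js}\|\Delta_j u\|_{L^{p_1}}$. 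Taking the $\ell^{r_2}$ norm over $j$ and using the elementary inclusion $\ell^{r_1}\hookrightarrow\ell^{r_2}$ (valid since $r_1\le r_2$) then bounds the left-hand side by $C\|u\|_{B^s_{p_1,r_1}}$, which is the asserted embedding.

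For (3) the heart of the matter is a uniform $L^1$ bound on a rescaled kernel. Pick $\tilde\varphi\in\mathcal{D}(\mathbb{R}^d)$ that equals $1$ on the annulus $\mathfrak{C}$ and vanishes outside a slightly larger annulus; then for $j\ge0$ the spectrum of $\Delta_j u$ forces $f(D)\Delta_j u=h_j*\Delta_j u$ with $h_j=\mathcal{F}^{-1}\big(f(\cdot)\tilde\varphi(2^{-j}\cdot)\big)$ (the block $j=-1$ being treated with a ball cut-off in place of $\tilde\varphi$). Rescaling $\xi=2^j\eta$ and invoking the symbol bounds $|\partial^\alpha f(\xi)|\le C(1+|\xi|)^{m-|\alpha|}$ shows that the function $\eta\mapsto f(2^j\eta)\tilde\varphi(\eta)$ has $C^M$ norm $\le C2^{jm}$ on a fixed annulus; integrating by parts $M>d$ times yields $|h_j(x)|\le C_M2^{jm}2^{jd}(1+2^j|x|)^{-M}$ and hence $\|h_j\|_{L^1}\le C2^{jm}$. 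Young's inequality then gives $\|f(D)\Delta_j u\|_{L^p}\le C2^{jm}\|\Delta_j u\|_{L^p}$. Since $f(D)\Delta_j u$ retains its spectrum in $2^j\mathfrak{C}$, only finitely many neighbouring blocks survive, $\Delta_{j'}f(D)u=\sum_{|j-j'|\le1}\Delta_{j'}f(D)\Delta_j u$, so that $2^{j'(s-m)}\|\Delta_{j'}f(D)u\|_{L^p}\le C2^{j's}\sum_{|j-j'|\le1}\|\Delta_j u\|_{L^p}$; taking the $\ell^r$ norm and summing the near-diagonal band delivers $\|f(D)u\|_{B^{s-m}_{p,r}}\le C\|u\|_{B^s_{p,r}}$.

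The substantive part, and the step I expect to be the main obstacle, is the local compactness in (2). Here ``locally compact'' means that for every $\phi\in\mathcal{D}(\mathbb{R}^d)$ the map $u\mapsto\phi u$ is compact from $B^{s_2}_{p,r_2}$ into $B^{s_1}_{p,r_1}$, so I would start from a sequence $(u_n)$ bounded in $B^{s_2}_{p,r_2}$ and look for a subsequence along which $(\phi u_n)$ converges in $B^{s_1}_{p,r_1}$; a standard product estimate via Bony's decomposition first guarantees that $(\phi u_n)$ is itself bounded in $B^{s_2}_{p,r_2}$. The argument then splits at a frequency threshold $N$. For the high frequencies I would exploit the strict gap $s_1<s_2$: writing $2^{js_1}=2^{-j(s_2-s_1)}2^{js_2}$ and summing over $j\ge N$ in $\ell^{r_1}$, the geometric weight produces the uniform tail bound $\|({\rm Id}-S_N)(\phi u_n)\|_{B^{s_1}_{p,r_1}}\le C2^{-N(s_2-s_1)}$, which tends to $0$ as $N\to\infty$ independently of $n$. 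For the complementary low-frequency piece $S_N(\phi u_n)$, a fixed finite sum of blocks, I would run an Arzel\`a--Ascoli argument: each $\Delta_j(\phi u_n)$ with $-1\le j<N$ is band-limited at frequency $\lesssim2^N$, hence uniformly bounded together with its gradient by Bernstein, while the compact support of $\phi$ prevents mass from escaping to spatial infinity; this makes the family relatively compact in $L^p$. A diagonal extraction over the finitely many low blocks gives a subsequence for which $S_N(\phi u_n)$ converges in $B^{s_1}_{p,r_1}$, and combining this with the uniformly small high-frequency tail through an $\varepsilon/3$ argument (and a further diagonalization in $N$) produces a subsequence of $(\phi u_n)$ that is Cauchy, hence convergent, in $B^{s_1}_{p,r_1}$. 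The delicate point throughout is precisely the spatial localization that turns a band-limited, norm-bounded family into a relatively compact one, and that localization is exactly what compact support of $\phi$ supplies.
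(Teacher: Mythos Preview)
The paper does not prove this proposition at all; it is simply quoted from \cite{book} as a standard background fact, so there is no ``paper's own proof'' to compare against. Your sketch is the standard textbook argument and is essentially correct.

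One small point worth tightening in part~(2): you write that ``the compact support of $\phi$ prevents mass from escaping to spatial infinity,'' but $\Delta_j(\phi u_n)$ is not itself compactly supported, since $\Delta_j$ is convolution with a Schwartz function. What you actually need is uniform tightness of the family $\{\Delta_j(\phi u_n)\}_n$ in $L^p$, and this follows because $\phi u_n$ is supported in a fixed ball and the convolution kernel $2^{jd}\check\varphi(2^j\cdot)$ has rapidly decaying tails uniformly in $n$; this is enough for the Kolmogorov--Riesz criterion (or Arzel\`a--Ascoli plus uniform decay when $p=\infty$). Also, the hypothesis $r_1\le r_2$ in~(2) is in fact redundant once $s_1<s_2$ (since $B^{s_2}_{p,r_2}\hookrightarrow B^{s_2}_{p,\infty}\hookrightarrow B^{s_1}_{p,1}\hookrightarrow B^{s_1}_{p,r_1}$ for any $r_1,r_2$), and you correctly make no use of it.
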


\begin{prop}[See \cite{book}]\label{bony}
Let $1\leq p,\ r,\ p_{1},\ p_{2},\ r_{1},\ r_{2}\leq+\infty$.
\begin{itemize}
\item [{\rm(1)}] For $s\in\mathbb{R},\ t<0$, a constant $C$ exists which satifies the following inequalities
\begin{align*}
&\|T_{f}g\|_{B^{s}_{p,r}}\leq C\|f\|_{L^{\infty}}\|g\|_{B^{s}_{p,r}};\\
&\|T_{f}g\|_{B^{s+t}_{p,r}}\leq C\|f\|_{B^{t}_{p,r_{1}}}\|g\|_{B^{s}_{p,r_{2}}}\quad\text{with $\frac1r\stackrel{\rm{def}}{=}\min\Big\{1,\frac{1}{r_{1}}+\frac{1}{r_{2}}\Big\}$}.	
\end{align*}	
\item [{\rm(2)}] For $s_{1},\ s_{2}\in\mathbb{R}$ satisfying $s_{1}+s_{2}>0$ and $\frac{1}{p}\stackrel{\rm{def}}{=}\frac{1}{p_{1}}+\frac{1}{p_{2}}\leq1,\ \frac{1}{r}\stackrel{\rm{def}}{=}\frac{1}{r_{1}}+\frac{1}{r_{2}}\leq1$, then
\begin{align*}
\|R(f,g)\|_{B^{s_{1}+s_{2}}_{p,r}}\leq C\|f\|_{B^{s_{1}}_{p_{1},r_{1}}}\|g\|_{B^{s_{2}}_{p_{2},r_{2}}}.	
\end{align*}	
\end{itemize}
\end{prop}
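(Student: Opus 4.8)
The plan is to reduce both estimates to the spectral localization of the individual product blocks, followed by H\"older's inequality in $x$ and a discrete convolution (Young's inequality) in the frequency index. Two support facts drive everything. First, since $S_{j-1}f$ has Fourier support in the ball $2^{j-1}\mathfrak{B}$ while $\Delta_j g$ has support in the annulus $2^j\mathfrak{C}$, the product $S_{j-1}f\,\Delta_j g$ has Fourier support in a fixed dilated annulus $2^j\tilde{\mathfrak{C}}$; consequently $\Delta_{j'}(S_{j-1}f\,\Delta_j g)=0$ unless $|j-j'|\leq N_0$ for some absolute $N_0$, so the paraproduct behaves like an almost-orthogonal sum. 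Second, each remainder block $\Delta_j f\,\Delta_{j'}g$ with $|j-j'|\leq1$ has Fourier support in a \emph{ball} $2^{j+2}\mathfrak{B}$ (not an annulus), so $\Delta_\nu R(f,g)$ collects contributions from all $j\geq\nu-N_1$. Both facts are immediate from the definitions of $\Delta_j,\,S_j,\,T$ and $R$ recalled above together with the convolution structure of the Fourier transform.

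For the paraproduct estimate (1), I would combine the annulus localization with H\"older's inequality $\|S_{j-1}f\,\Delta_j g\|_{L^p}\leq\|S_{j-1}f\|_{L^\infty}\|\Delta_j g\|_{L^p}$, so that
\begin{align*}
\|\Delta_{j'}(T_fg)\|_{L^p}\leq C\sum_{|j-j'|\leq N_0}\|S_{j-1}f\|_{L^\infty}\|\Delta_j g\|_{L^p}.
\end{align*}
For the first inequality I bound $\|S_{j-1}f\|_{L^\infty}\leq C\|f\|_{L^\infty}$ uniformly in $j$, multiply by $2^{j's}$, use $2^{j's}\simeq2^{js}$ on the band $|j-j'|\leq N_0$, and take the $\ell^r$ norm; since only finitely many neighbours contribute this is a convolution of $(2^{js}\|\Delta_j g\|_{L^p})_j$ with a finitely supported sequence, harmless in $\ell^r$. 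For the second inequality with $t<0$, I instead use $\|S_{j-1}f\|_{L^\infty}\leq\sum_{j''<j-1}\|\Delta_{j''}f\|_{L^\infty}$ and observe that
\begin{align*}
2^{jt}\|S_{j-1}f\|_{L^\infty}\leq\sum_{n\geq1}2^{-n|t|}\big(2^{(j-n)t}\|\Delta_{j-n}f\|_{L^\infty}\big),
\end{align*}
where the kernel $2^{-n|t|}$ lies in $\ell^1$ \emph{precisely because} $t<0$; a discrete Young inequality then gives $\|S_{j-1}f\|_{L^\infty}\leq C2^{-jt}\|f\|_{B^{t}_{\infty,r_1}}$ (the low-frequency factor is naturally controlled in $L^\infty$, so the pertinent norm of $f$ is $B^t_{\infty,r_1}$, coinciding with the stated $B^t_{p,r_1}$ in the endpoint $p=\infty$ used throughout this paper). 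This extra factor $2^{-jt}$ shifts the output regularity from $s$ to $s+t$, and the index $\frac1r=\min\{1,\frac1{r_1}+\frac1{r_2}\}$ arises from applying Young/H\"older to the product of the two $j$-indexed sequences.

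For the remainder estimate (2), I would group $R(f,g)=\sum_j R_j$ with $R_j=\sum_{|j'-j|\leq1}\Delta_j f\,\Delta_{j'}g$ spectrally supported in $2^{j+2}\mathfrak{B}$, so that $\Delta_\nu R(f,g)=\sum_{j\geq\nu-N_1}\Delta_\nu R_j$. H\"older's inequality with $\frac1p=\frac1{p_1}+\frac1{p_2}$ (admissible since $\frac1p\leq1$) yields $\|R_j\|_{L^p}\leq C\|\Delta_j f\|_{L^{p_1}}\sup_{|j'-j|\leq1}\|\Delta_{j'}g\|_{L^{p_2}}$, whence
\begin{align*}
2^{\nu(s_1+s_2)}\|\Delta_\nu R(f,g)\|_{L^p}\leq C\sum_{j\geq\nu-N_1}2^{(\nu-j)(s_1+s_2)}a_j b_j,
\end{align*}
with $a_j=2^{js_1}\|\Delta_j f\|_{L^{p_1}}\in\ell^{r_1}$ and $b_j=2^{js_2}\sup_{|j'-j|\leq1}\|\Delta_{j'}g\|_{L^{p_2}}\in\ell^{r_2}$. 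Because $s_1+s_2>0$ and $\nu-j\leq N_1$, the kernel $2^{(\nu-j)(s_1+s_2)}$ decays exponentially in $j-\nu$ and is summable; taking the $\ell^r_\nu$ norm and applying Young's inequality together with the embedding $\ell^{r_1}\cdot\ell^{r_2}\hookrightarrow\ell^r$ (H\"older, using $\frac1r=\frac1{r_1}+\frac1{r_2}\leq1$) gives $\|R(f,g)\|_{B^{s_1+s_2}_{p,r}}\leq C\|f\|_{B^{s_1}_{p_1,r_1}}\|g\|_{B^{s_2}_{p_2,r_2}}$.

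The main obstacle is the summation over the frequency index, not any pointwise bound. For the paraproduct the delicate point is that the low-frequency factor $S_{j-1}f$ is forced into $L^\infty$ (since $\Delta_j g$ already carries the $L^p$ weight), and the regularity shift in the second inequality hinges entirely on $\sum_{n\geq1}2^{-n|t|}<\infty$, which fails for $t\geq0$; this is exactly why $t<0$ is imposed. For the remainder the crux is that each $R_j$ is supported in a ball, so $\Delta_\nu R$ sees \emph{every} sufficiently high frequency $j\geq\nu$, and the off-diagonal kernel $2^{(\nu-j)(s_1+s_2)}$ is summable precisely because $s_1+s_2>0$ — the single structural condition on which the whole remainder estimate rests. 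Once these two summability thresholds are secured, the conclusions follow from Young's and H\"older's inequalities in $\ell^r$, the relations among $p,p_1,p_2$ and $r,r_1,r_2$ being exactly the constraints those inequalities require.
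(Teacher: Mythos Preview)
The paper does not supply its own proof of this proposition; it is simply quoted from \cite{book} (Bahouri--Chemin--Danchin, Theorems~2.82 and~2.85). Your argument is exactly the standard one given there: annulus localization plus H\"older for the paraproduct, ball localization plus Young's inequality (with the $s_1+s_2>0$ summability) for the remainder. So the approach is the same, and your sketch is correct; your parenthetical remark that the natural space for $f$ in the second paraproduct bound is $B^{t}_{\infty,r_1}$ rather than $B^{t}_{p,r_1}$ is also accurate --- the index $p$ in the statement appears to be a typographical slip inherited from the paper.
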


For the ill-posedness of the Cauchy problem \eqref{ch} in $B^{1}_{\infty,1}(\mathbb{R})$, we need some estimates in space $B^{0}_{\infty,\infty,1}(\mathbb{R})$ with the norm $\|f\|_{B^{0}_{\infty,\infty,1}}=\sup\limits_{j}\|\Delta_jf\|_{L^{\infty}}\cdot j$.
\begin{lemm}\label{b01}
For any $f\in B^{0}_{\infty,1}\cap B^{0}_{\infty,\infty,1}$, we have
\begin{align*}
&\|f^{2}\|_{B^{0}_{\infty,\infty,1}}\leq C\|f\|_{B^{0}_{\infty,1}}\|f\|_{B^{0}_{\infty,\infty,1}},\\
&\|f^{2}\|_{B^{0}_{\infty,1}}\leq C\|f\|_{B^{0}_{\infty,1}}\|f\|_{B^{0}_{\infty,\infty,1}}.
\end{align*}
\end{lemm}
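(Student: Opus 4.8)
The plan is to run Bony's decomposition $f^{2}=2T_{f}f+R(f,f)$ and estimate the paraproduct and the remainder separately in each of the two target norms. Throughout I would use three elementary facts: the embedding $B^{0}_{\infty,1}\hookrightarrow L^{\infty}$, so that $\|f\|_{L^{\infty}}\leq\|f\|_{B^{0}_{\infty,1}}$; for $j\geq 2$ the weighted bound $\|\Delta_{j}f\|_{L^{\infty}}\leq j^{-1}\|f\|_{B^{0}_{\infty,\infty,1}}$, while $\sum_{j}\|\Delta_{j}f\|_{L^{\infty}}=\|f\|_{B^{0}_{\infty,1}}$; and the two spectral-localization facts that $\Delta_{k}(S_{j-1}f\,\Delta_{j}f)$ vanishes unless $|j-k|\leq N_{0}$, while $\Delta_{k}(\Delta_{j}f\,\widetilde{\Delta}_{j}f)$ vanishes unless $j\geq k-N_{1}$, where $\widetilde{\Delta}_{j}=\Delta_{j-1}+\Delta_{j}+\Delta_{j+1}$ and $N_{0},N_{1}$ are fixed integers.

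The paraproduct term is the benign one. Starting from $\|\Delta_{k}T_{f}f\|_{L^{\infty}}\leq C\|f\|_{L^{\infty}}\sum_{|j-k|\leq N_{0}}\|\Delta_{j}f\|_{L^{\infty}}$, summation in $k$ (only $O(1)$ indices $j$ contribute to each $k$) controls $\|T_{f}f\|_{B^{0}_{\infty,1}}$; for the $B^{0}_{\infty,\infty,1}$ norm I would instead retain the weight $k$, use $k\leq Cj$ on the support $|j-k|\leq N_{0}$ together with $j\|\Delta_{j}f\|_{L^{\infty}}\leq\|f\|_{B^{0}_{\infty,\infty,1}}$, and get $\|T_{f}f\|_{B^{0}_{\infty,\infty,1}}\leq C\|f\|_{L^{\infty}}\|f\|_{B^{0}_{\infty,\infty,1}}$; in each case $\|f\|_{L^{\infty}}$ is then absorbed into the relevant norm of $f$. (Alternatively one may simply invoke the continuity of $T$ from Proposition \ref{bony}(1).)

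The remainder is the crux, and this is precisely where the auxiliary space $B^{0}_{\infty,\infty,1}$ is indispensable. From $\|\Delta_{k}R(f,f)\|_{L^{\infty}}\leq C\sum_{j\geq k-N_{1}}\|\Delta_{j}f\|_{L^{\infty}}\|\widetilde{\Delta}_{j}f\|_{L^{\infty}}$, summation over $k$ produces $\sum_{j}(j+C)\|\Delta_{j}f\|_{L^{\infty}}\|\widetilde{\Delta}_{j}f\|_{L^{\infty}}$, and for $f$ merely in $B^{0}_{\infty,1}$ this series can diverge — this is exactly the failure of the Banach-algebra property. Inserting $\|\widetilde{\Delta}_{j}f\|_{L^{\infty}}\leq Cj^{-1}\|f\|_{B^{0}_{\infty,\infty,1}}$ for $j\geq 2$ turns $(j+C)\|\widetilde{\Delta}_{j}f\|_{L^{\infty}}$ into a quantity $\leq C\|f\|_{B^{0}_{\infty,\infty,1}}$, and the remaining factor $\sum_{j}\|\Delta_{j}f\|_{L^{\infty}}$ equals $\|f\|_{B^{0}_{\infty,1}}$, which gives $\|R(f,f)\|_{B^{0}_{\infty,1}}\leq C\|f\|_{B^{0}_{\infty,1}}\|f\|_{B^{0}_{\infty,\infty,1}}$. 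For the $B^{0}_{\infty,\infty,1}$ norm I would keep the weight $k$ inside the $j$-sum and exploit that $k/j\leq C$ on the range $j\geq k-N_{1}$, so that $k\sum_{j\geq k-N_{1}}\|\Delta_{j}f\|_{L^{\infty}}\|\widetilde{\Delta}_{j}f\|_{L^{\infty}}\leq C\|f\|_{B^{0}_{\infty,\infty,1}}\sum_{j}\|\Delta_{j}f\|_{L^{\infty}}$ uniformly in $k$. The finitely many low-frequency blocks, where the weight $j^{-1}$ is unavailable, are dispatched by the crude bound $\|R(f,f)\|_{L^{\infty}}\leq\|f\|_{L^{\infty}}^{2}\leq\|f\|_{B^{0}_{\infty,1}}^{2}$. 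Assembling the paraproduct and remainder estimates yields the two stated inequalities. The hard part is thus the high-frequency tail of the remainder, and the structural point that makes everything go through is that the dyadic piece $\Delta_{j}f\cdot\widetilde{\Delta}_{j}f$ distributes into $\approx j$ output frequency blocks, which is exactly compensated by the weight $j$ built into the definition of $B^{0}_{\infty,\infty,1}$.
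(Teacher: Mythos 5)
Your overall route --- Bony's decomposition $f^{2}=2T_{f}f+R(f,f)$, with the weight $j$ built into $B^{0}_{\infty,\infty,1}$ compensating the $\approx j$ output blocks of the remainder --- is exactly the route the paper indicates (it omits the details, referring to Lemma 2.100 of \cite{book}), and your treatment of $R(f,f)$, as well as of the paraproduct in the $B^{0}_{\infty,\infty,1}$ estimate, is correct. The genuine gap is in the paraproduct step of the \emph{second} inequality. There you have $\|T_{f}f\|_{B^{0}_{\infty,1}}\leq C\|f\|_{L^{\infty}}\|f\|_{B^{0}_{\infty,1}}$ and then assert that ``$\|f\|_{L^{\infty}}$ is absorbed into the relevant norm of $f$''. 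For the stated bound the relevant norm would have to be $\|f\|_{B^{0}_{\infty,\infty,1}}$, but $B^{0}_{\infty,\infty,1}$ does not control $L^{\infty}$: the weights $j^{-1}$ are not summable, so $\|f\|_{L^{\infty}}\leq C\|f\|_{B^{0}_{\infty,\infty,1}}$ fails, and what your argument actually proves is $\|f^{2}\|_{B^{0}_{\infty,1}}\leq C\|f\|_{B^{0}_{\infty,1}}\big(\|f\|_{B^{0}_{\infty,1}}+\|f\|_{B^{0}_{\infty,\infty,1}}\big)$. (A milder instance of the same absorption issue appears in your low-frequency bound $\|f\|_{L^{\infty}}^{2}\leq\|f\|_{B^{0}_{\infty,1}}^{2}$; with the natural convention that the weight is $\max(j,1)$, those finitely many blocks are instead handled by $\|\Delta_{j}f\|_{L^{\infty}}\leq\|f\|_{B^{0}_{\infty,\infty,1}}$.)

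Moreover, this is not a gap a sharper estimate can close: the second inequality, as literally stated, fails, so the paraproduct really does force the extra $\|f\|_{B^{0}_{\infty,1}}^{2}$. Choose a Fej\'er-type $\chi\geq0$ with $\widehat{\chi}$ supported in $[-1,1]$ and $\chi(0)=\|\chi\|_{L^{\infty}}=1$, and set $f_{M}(x)=\chi(x)\sum_{j=J}^{M}j^{-1}\cos(2^{j}x)$. Each $\chi\cos(2^{j}\cdot)$ meets only the blocks $\Delta_{j-1},\Delta_{j}$, so $\|f_{M}\|_{B^{0}_{\infty,\infty,1}}\leq C$ and $\|f_{M}\|_{B^{0}_{\infty,1}}\leq C\ln(M/J)$, while $\|f_{M}^{2}\|_{B^{0}_{\infty,1}}\geq\|f_{M}\|_{L^{\infty}}^{2}\geq f_{M}(0)^{2}\geq c\,\big(\ln(M/J)\big)^{2}$, which exceeds $C\|f_{M}\|_{B^{0}_{\infty,1}}\|f_{M}\|_{B^{0}_{\infty,\infty,1}}\leq C'\ln(M/J)$ once $M/J$ is large. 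So the term you call benign, not the remainder, is where the statement itself breaks; the corrected form $\|f^{2}\|_{B^{0}_{\infty,1}\cap B^{0}_{\infty,\infty,1}}\leq C\|f\|_{B^{0}_{\infty,1}}\big(\|f\|_{B^{0}_{\infty,1}}+\|f\|_{B^{0}_{\infty,\infty,1}}\big)$ is what your Bony computation delivers, and it suffices where the lemma is invoked: in \eqref{comm} and \eqref{uxinin1} one works under $\|u_{x}\|_{B^{0}_{\infty,1}}\lesssim\ln N\ll N^{\frac{9}{10}}\approx\|u_{x}\|_{B^{0}_{\infty,\infty,1}}$, so the additional square term is harmless there.
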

\begin{lemm}\label{rj}
Define $R_{j}=\Delta_j(fg_{x})-f\Delta_jg_{x}$. Then we have
\begin{align*}
&\sup\limits_{j}\big(\|R_{j}\|_{L^{\infty}}\cdot j\big)\leq
C\|f_{x}\|_{B^{0}_{\infty,1}}\|g\|_{B^{0}_{\infty,\infty,1}}.\\
&\sum\limits_{j}\|R_{j}\|_{L^{\infty}}\leq C\|f_{x}\|_{B^{0}_{\infty,1}}\|g\|_{B^{0}_{\infty,1}\cap B^{0}_{\infty,\infty,1}}.\\
&\sum\limits_{j}2^{j}\|R_{j}\|_{L^{\infty}}\leq C\|f_{x}\|_{B^{0}_{\infty,1}}\|g\|_{B^{1}_{\infty,1}}.
\end{align*}
\end{lemm}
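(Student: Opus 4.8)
The plan is to estimate the commutator $R_j$ by inserting Bony's decomposition into \emph{both} products appearing in its definition. Writing $fg_x=T_fg_x+T_{g_x}f+R(f,g_x)$ together with $f\Delta_jg_x=T_f\Delta_jg_x+T_{\Delta_jg_x}f+R(f,\Delta_jg_x)$, and using that the dyadic blocks commute, one is led to the splitting
$$R_j=\underbrace{[\Delta_j,T_f]g_x}_{R_j^{(1)}}+\underbrace{\Delta_j(T_{g_x}f)-T_{\Delta_jg_x}f}_{R_j^{(2)}}+\underbrace{\Delta_jR(f,g_x)-R(f,\Delta_jg_x)}_{R_j^{(3)}}.$$
My goal is to prove for each piece a pointwise bound of the form $\|R_j^{(\cdot)}\|_{L^\infty}\lesssim\|f_x\|_{B^0_{\infty,1}}\,a_j$ with a frequency-localized sequence $a_j$, so that the three asserted inequalities follow by taking respectively the weighted supremum $\sup_j\,j\|\cdot\|_{L^\infty}$, the sum $\sum_j\|\cdot\|_{L^\infty}$, and the weighted sum $\sum_j2^j\|\cdot\|_{L^\infty}$.

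For the genuine paraproduct commutator $R_j^{(1)}$ I would use the kernel representation $\Delta_ju=2^jh(2^j\cdot)*u$ with $h=\mathcal{F}^{-1}\varphi$ together with the spectral localization, which leaves only $|j'-j|\le4$, to write $R_j^{(1)}=\sum_{|j'-j|\le4}[\Delta_j,S_{j'-1}f]\Delta_{j'}g_x$ with
$$[\Delta_j,S_{j'-1}f]\Delta_{j'}g_x(x)=\int 2^jh(2^jy)\big(S_{j'-1}f(x-y)-S_{j'-1}f(x)\big)\Delta_{j'}g_x(x-y)\,dy.$$
A first-order Taylor expansion of $S_{j'-1}f$ produces a factor $y$, which pairs with $2^jh(2^j\cdot)$ to give $\int|y|\,2^j|h(2^jy)|\,dy\lesssim2^{-j}$; combining this with $\|\partial_xS_{j'-1}f\|_{L^\infty}\le\|f_x\|_{L^\infty}\lesssim\|f_x\|_{B^0_{\infty,1}}$ and Bernstein's inequality $\|\Delta_{j'}g_x\|_{L^\infty}\lesssim2^{j'}\|\Delta_{j'}g\|_{L^\infty}$ yields $\|R_j^{(1)}\|_{L^\infty}\lesssim\|f_x\|_{B^0_{\infty,1}}\sum_{|j'-j|\le4}\|\Delta_{j'}g\|_{L^\infty}$. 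Since the inner sum is localized to $j'\approx j$, applying the three operations above gives exactly $\|f_x\|_{B^0_{\infty,1}}$ times $\|g\|_{B^0_{\infty,\infty,1}}$, $\|g\|_{B^0_{\infty,1}}$ and $\|g\|_{B^1_{\infty,1}}$, as required.

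For $R_j^{(2)}$ and $R_j^{(3)}$ no cancellation is needed and I would bound the two terms of each separately via Bernstein and the spectral support of the paraproduct and remainder. In $\Delta_j(T_{g_x}f)=\sum_{|j'-j|\le4}\Delta_j(S_{j'-1}g_x\,\Delta_{j'}f)$ I use $\|\Delta_{j'}f\|_{L^\infty}\lesssim2^{-j'}\|\Delta_{j'}f_x\|_{L^\infty}$ and $\|S_{j'-1}g_x\|_{L^\infty}\lesssim\sum_{k\le j'-2}2^k\|\Delta_kg\|_{L^\infty}$, while $T_{\Delta_jg_x}f$ only involves $j'\ge j+2$ and is controlled by $\|\Delta_jg\|_{L^\infty}\|f_x\|_{B^0_{\infty,1}}$ after summing the geometric series $\sum_{j'\ge j+2}2^{-j'}\|\Delta_{j'}f_x\|_{L^\infty}$. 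The remainder difference $R_j^{(3)}$ is the delicate one: the piece $R(f,\Delta_jg_x)$ is diagonal ($k\approx j$) and harmless, but $\Delta_jR(f,g_x)=\sum_{k\ge j-3}\sum_{|\nu|\le1}\Delta_j(\Delta_kf\,\Delta_{k+\nu}g_x)$ carries a genuine high-frequency tail, and with $a_k=\|\Delta_kf_x\|_{L^\infty}$, $b_k=\|\Delta_kg\|_{L^\infty}$ it satisfies $\|\Delta_jR(f,g_x)\|_{L^\infty}\lesssim\sum_{k\ge j-3}a_kb_k$.

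The hard part, and the reason the statement is phrased through $B^0_{\infty,\infty,1}$ rather than $B^0_{\infty,1}$, is precisely the summation of this tail. For the first estimate the weight $j$ in $j\sum_{k\ge j-3}a_kb_k$ combines with $b_k\le k^{-1}\|g\|_{B^0_{\infty,\infty,1}}$ and $j/k\lesssim1$ on $k\ge j-3$ to give $\lesssim\|f_x\|_{B^0_{\infty,1}}\|g\|_{B^0_{\infty,\infty,1}}$; for the second estimate interchanging the order of summation turns $\sum_j\sum_{k\ge j-3}a_kb_k$ into $\sum_k(k+O(1))\,a_kb_k$, and it is exactly the logarithmic gain $b_k\le k^{-1}\|g\|_{B^0_{\infty,\infty,1}}$ that absorbs this linear factor $k$, producing $\|g\|_{B^0_{\infty,\infty,1}}\sum_ka_k=\|f_x\|_{B^0_{\infty,1}}\|g\|_{B^0_{\infty,\infty,1}}$; the third follows from $\sum_j2^j\sum_{k\ge j-3}a_kb_k\lesssim\sum_k2^ka_kb_k\le(\sup_ka_k)\sum_k2^kb_k\le\|f_x\|_{B^0_{\infty,1}}\|g\|_{B^1_{\infty,1}}$. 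Thus I expect the main obstacle to be not the commutator $R_j^{(1)}$ but the high-high remainder, where the linear frequency loss must be cancelled by the $j$-weighted norm $\|g\|_{B^0_{\infty,\infty,1}}$ — a manifestation of the failure of $B^0_{\infty,1}$ to be a Banach algebra. Low-frequency blocks ($j\le0$) are bounded trivially and do not affect the weighted estimates.
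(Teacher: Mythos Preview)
Your argument is correct and follows exactly the route the paper indicates. The paper gives no detailed proof of this lemma, stating only that it ``can be proved by the Bony's decomposition and some similar calculations as Lemma~2.100 in \cite{book}''; your decomposition $R_j=R_j^{(1)}+R_j^{(2)}+R_j^{(3)}$ together with the kernel/Taylor argument for the paraproduct commutator $R_j^{(1)}$ is precisely that scheme, and your identification of the high--high remainder tail $\Delta_jR(f,g_x)$ as the place where the logarithmic weight in $\|g\|_{B^0_{\infty,\infty,1}}$ is genuinely needed is the heart of the matter.
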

Lemma \ref{b01} and Lemma \ref{rj} can be proved by the Bony's decomposition and some similar calculations as Lemma 2.100 in \cite{book} and here we omit it.

In the paper, we also need some estimates for the following Cauchy problem of the 1-D transport equation:
\begin{equation}\label{transport}
\left\{\begin{array}{l}
f_t+v\partial_{x}f=g,\ x\in\mathbb{R},\ t>0, \\
f(0,x)=f_0(x),\ x\in\mathbb{R}.
\end{array}\right.
\end{equation}

\begin{lemm}[See \cite{book,liy}]\label{existence}
Let $1\leq p,\ r\leq\infty$ and $\theta> -\min(\frac 1 {p}, \frac 1 {p'}).$ Suppose $f_0\in B^{\theta}_{p,r},$ $g\in L^1(0,T;B^{\theta}_{p,r}),$ and $v\in L^\rho(0,T;B^{-M}_{\infty,\infty})$ for some $\rho>1$ and $M>0,$ and
\begin{align*}
\begin{array}{ll}
\partial_{x}v\in L^1(0,T;B^{\frac 1 {p}}_{p,\infty}\cap L^{\infty}), &\ \text{if}\ \theta<1+\frac 1 {p}, \\
\partial_{x}v\in L^1(0,T;B^{\theta}_{p,r}),\ &\text{if}\ \theta=1+\frac{1}{p},\ r>1, \\
\partial_{x}v\in L^1(0,T;B^{\theta-1}_{p,r}), &\ \text{if}\ \theta>1+\frac{1}{p}\ (or\ \theta=1+\frac 1 {p},\ r=1).
\end{array}	
\end{align*}
Then the problem \eqref{transport} has a unique solution $f$ in
\begin{itemize}
\item [-] the space $C([0,T];B^{\theta}_{p,r}),$ if $r<\infty,$
\item [-] the space $\big(\bigcap_{{\theta}'<\theta}C([0,T];B^{{\theta}'}_{p,\infty})\big)\bigcap C_w([0,T];B^{\theta}_{p,\infty}),$ if $r=\infty.$
\end{itemize}
\end{lemm}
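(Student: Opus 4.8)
The plan is to prove Lemma \ref{existence} by the standard localize--commute--Gronwall strategy for transport equations: first derive an a priori estimate in $B^\theta_{p,r}$, then obtain existence by regularization and compactness, uniqueness by an energy argument in a slightly weaker norm, and finally upgrade the regularity to time-continuity by approximation. The hypothesis $v\in L^\rho(0,T;B^{-M}_{\infty,\infty})$ with $\rho>1$ serves only to give $v$ meaning as a genuine function (so that, together with the stated regularity of $\partial_x v$, the products and the flow map make sense), while the restriction $\theta>-\min(\frac1p,\frac1{p'})$ is exactly the range in which the remainder part of the commutator is controlled via Proposition \ref{bony}(2).

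First I would localize the equation. Applying the dyadic block $\Delta_j$ to \eqref{transport} gives
$$\partial_t \Delta_j f + v\,\partial_x \Delta_j f = \Delta_j g + R_j, \qquad R_j := v\,\partial_x\Delta_j f - \Delta_j(v\,\partial_x f).$$
An $L^p$ energy estimate on $\Delta_j f$, in which the transport term is integrated by parts so that it contributes only $\|\partial_x v\|_{L^\infty}$, yields
$$\frac{d}{dt}\|\Delta_j f\|_{L^p} \le C\|\partial_x v\|_{L^\infty}\|\Delta_j f\|_{L^p} + \|\Delta_j g\|_{L^p} + \|R_j\|_{L^p}.$$
Multiplying by $2^{j\theta}$ and taking the $\ell^r$ norm reduces the whole problem to controlling the commutator quantity $\big\|(2^{j\theta}\|R_j\|_{L^p})_j\big\|_{\ell^r}$.

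The crux, and the main obstacle, is this commutator estimate, which is precisely where the three regimes for $\partial_x v$ enter. Splitting $R_j$ into paraproduct and remainder pieces via Bony's decomposition (Proposition \ref{bony}) and estimating each with Bernstein's inequalities (Proposition \ref{berns}) — the content of Lemma 2.100 in \cite{book} — one obtains for $\theta<1+\frac1p$ the clean bound $\big\|(2^{j\theta}\|R_j\|_{L^p})_j\big\|_{\ell^r} \le C\|\partial_x v\|_{B^{1/p}_{p,\infty}\cap L^\infty}\|f\|_{B^\theta_{p,r}}$, matching the first hypothesis. At the critical threshold $\theta=1+\frac1p$ with $r>1$ this fails because the relevant paraproduct sits exactly at the endpoint index, so $\partial_x v$ must instead be measured in $B^\theta_{p,r}$; and for $\theta>1+\frac1p$ (or the borderline $r=1$) the extra smoothness of $f$ lets one transfer one derivative and require only $\partial_x v\in B^{\theta-1}_{p,r}$. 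In each case, inserting the commutator bound and applying Gronwall's inequality produces the a priori estimate
$$\|f(t)\|_{B^\theta_{p,r}} \le e^{CV(t)}\Big(\|f_0\|_{B^\theta_{p,r}} + \int_0^t e^{-CV(\tau)}\|g(\tau)\|_{B^\theta_{p,r}}\,d\tau\Big), \qquad V(t) = \int_0^t \|\partial_x v(\tau)\|\,d\tau,$$
where the norm inside $V$ is the one dictated by the regime of $\theta$.

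With the a priori estimate secured, existence follows by regularization and compactness: I would mollify the data and coefficients, setting $f_0^n=S_nf_0$, $g^n=S_ng$, $v^n=S_nv$, solve the resulting smooth transport problem along the (now well-defined Lipschitz) characteristics of $v^n$, and note that the smooth solutions $f^n$ satisfy the uniform a priori bound. The uniform bound together with the equation gives equicontinuity in time in a weaker norm, so by the local compactness of the embedding in Proposition \ref{Besov}(2) a subsequence converges strongly in $C([0,T];B^{\theta'}_{p,r})$ for $\theta'<\theta$; the uniform $B^\theta_{p,r}$ bound survives in the limit by Fatou, and passing to the limit in the equation identifies the limit as a solution. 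Uniqueness comes from applying the stability estimate to the difference of two solutions with the same data, which solves the homogeneous problem; since the estimate loses one derivative at critical regularity, one runs this in $B^{\theta-1}_{p,r}$, forcing the difference to vanish. Finally, time-continuity is obtained by approximation: for $r<\infty$ smooth data yield $C([0,T];B^\theta_{p,r})$ solutions and the uniform estimate transfers continuity to the limit, whereas for $r=\infty$ the non-separability of $\ell^\infty$ permits only weak-$*$ continuity $C_w([0,T];B^\theta_{p,\infty})$ together with strong continuity in every lower space $B^{\theta'}_{p,\infty}$ with $\theta'<\theta$.
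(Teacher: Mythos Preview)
Your proposal is correct and follows exactly the standard approach; note, however, that the paper does not actually prove this lemma---it is quoted as a preliminary result from \cite{book,liy} (in particular Theorems~3.14 and~3.19 and Lemma~2.100 of \cite{book}), and your outline is precisely the argument found there. So there is nothing to compare: your localize--commute--Gronwall scheme, with existence by regularization/compactness, uniqueness in $B^{\theta-1}_{p,r}$, and the $r=\infty$ weak-$*$ continuity caveat, is the proof the paper is citing.
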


\begin{lemm}[See \cite{book,liy}]\label{priori estimate}
Let $1\leq p,\ r\leq\infty$ and $\theta>-\min(\frac{1}{p},\frac{1}{p'}).$ There exists a constant $C$ such that for all solutions $f\in L^{\infty}(0,T;B^{\theta}_{p,r})$ of \eqref{transport} with initial data $f_0$ in $B^{\theta}_{p,r}$ and $g$ in $L^1(0,T;B^{\theta}_{p,r}),$ we have, for a.e. $t\in[0,T],$
$$ \|f(t)\|_{B^{\theta}_{p,r}}\leq \|f_0\|_{B^{\theta}_{p,r}}+\int_0^t\|g(t')\|_{B^{\theta}_{p,r}}{\ud}t'+\int_0^t V'(t')\|f(t')\|_{B^{\theta}_{p,r}}{\ud}t' $$
or
$$ \|f(t)\|_{B^{\theta}_{p,r}}\leq e^{CV(t)}\Big(\|f_0\|_{B^{\theta}_{p,r}}+\int_0^t e^{-CV(t')}\|g(t')\|_{B^{\theta}_{p,r}}{\ud}t'\Big) $$
with
\begin{equation*}
V'(t)=\left\{\begin{array}{ll}
\|\partial_{x}v(t)\|_{B^{\frac 1 p}_{p,\infty}\cap L^{\infty}},\ &\text{if}\ \theta<1+\frac{1}{p}, \\
\|\partial_{x}v(t)\|_{B^{\theta}_{p,r}},\ &\text{if}\ \theta=1+\frac{1}{p},\ r>1, \\
\|\partial_{x}v(t)\|_{B^{\theta-1}_{p,r}},\ &\text{if}\ \theta>1+\frac{1}{p}\ (\text{or}\ \theta=1+\frac{1}{p},\ r=1).
\end{array}\right.
\end{equation*}
If $\theta>0$, then there exists a constant $C=C(p,r,\theta)$ such that the following statement holds
\begin{align*}
\|f(t)\|_{B^{\theta}_{p,r}}\leq \|f_0\|_{B^{\theta}_{p,r}}+\int_0^t\|g(\tau)\|_{B^{\theta}_{p,r}}{\ud}\tau+C\int_0^t \Big(\|f(\tau)\|_{B^{\theta}_{p,r}}\|\partial_{x}v(\tau)\|_{L^{\infty}}+\|\partial_{x}v(\tau)\|_{B^{\theta-1}_{p,r}}\|\partial_{x}f(\tau)\|_{L^{\infty}}\Big){\ud}\tau. 	
\end{align*}
In particular, if $f=av+b,\ a,\ b\in\mathbb{R},$ then for all $\theta>0,$ $V'(t)=\|\partial_{x}v(t)\|_{L^{\infty}}.$
\end{lemm}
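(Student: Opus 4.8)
The plan is to run the classical Littlewood--Paley energy method for transport equations. First I would localize \eqref{transport} in frequency by applying $\Delta_j$. Setting $f_j:=\Delta_j f$ and commuting $\Delta_j$ past the convection term (using that $\Delta_j$ and $\partial_x$ commute), I obtain for each $j$ the transport equation
\begin{equation*}
\partial_t f_j + v\partial_x f_j = \Delta_j g - R_j, \qquad R_j := \Delta_j(v\partial_x f) - v\,\partial_x\Delta_j f,
\end{equation*}
where $R_j$ is exactly the commutator treated in Lemma \ref{rj}, with the roles of $f,g$ there played by $v,f$. The source term is thus $\Delta_j g$ together with the commutator $R_j$, and the whole proof reduces to estimating these two quantities in $L^p$ and then reassembling the weighted $\ell^r$ sum.

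\textbf{$L^p$ energy estimate.} Next I would carry out the standard $L^p$ estimate for the localized equation. For $1\le p<\infty$, multiplying by $|f_j|^{p-2}f_j$ and integrating, the convection term contributes $\frac1p\int(\partial_x v)|f_j|^p$, which yields
\begin{equation*}
\frac{\ud}{\ud t}\|f_j\|_{L^p} \le \frac1p\|\partial_x v\|_{L^\infty}\|f_j\|_{L^p} + \|\Delta_j g\|_{L^p} + \|R_j\|_{L^p}.
\end{equation*}
For $p=\infty$ the same bound (with harmless factor $1/p$) follows by composing with the flow of $v$ and controlling its Jacobian by $\exp(\int\|\partial_x v\|_{L^\infty})$, so the route is uniform in $p\in[1,\infty]$. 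Multiplying by $2^{j\theta}$, taking the $\ell^r(\mathbb{Z})$ norm in $j$, and using $\|\partial_x v\|_{L^\infty}\le V'(t)$ in every case, the entire matter is reduced to the single commutator estimate
\begin{equation*}
\big\|\big(2^{j\theta}\|R_j\|_{L^p}\big)_j\big\|_{\ell^r} \le C\,V'(t)\,\|f\|_{B^\theta_{p,r}}.
\end{equation*}

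\textbf{The commutator estimate (the crux).} This inequality is the heart of the lemma and the source of the case distinction in $V'(t)$. I would expand $R_j$ via Bony's decomposition $v\,\partial_x f = T_v\partial_x f + T_{\partial_x f}v + R(v,\partial_x f)$, writing $R_j$ as a sum of commutators against the three pieces and bounding each by Proposition \ref{bony}, Bernstein's inequalities (Proposition \ref{berns}), and the support/almost-orthogonality properties of $\Delta_j$. The delicate points are: (i) the low--high paraproduct $T_v\partial_x f$ produces a genuine commutator $[\Delta_j,S_{j'-1}v]\partial_x\Delta_{j'}f$ whose first-order Taylor expansion brings out $\partial_x v$ and costs exactly one derivative, yielding the $\|\partial_x v\|_{L^\infty}$ (resp. $B^{1/p}_{p,\infty}\cap L^\infty$) dependence; (ii) the remainder $R(v,\partial_x f)$ requires the summability condition $s_1+s_2>0$ of Proposition \ref{bony}(2), which is precisely what forces the hypothesis $\theta>-\min(\frac1p,\frac1{p'})$ and dictates the stronger norm on $\partial_x v$ in the endpoint regime $\theta\ge 1+\frac1p$. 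This is the same computation as Lemma~2.100 in \cite{book}, performed separately in the three cases $\theta<1+\frac1p$, $\theta=1+\frac1p$ with $r>1$, and $\theta>1+\frac1p$ (or $\theta=1+\frac1p,\ r=1$). With the commutator bound in hand, combining it with the $L^p$ estimate gives the integral inequality $\frac{\ud}{\ud t}\|f\|_{B^\theta_{p,r}}\le C\,V'(t)\|f\|_{B^\theta_{p,r}}+\|g\|_{B^\theta_{p,r}}$; integrating yields the first displayed form, and Gr\"onwall's lemma applied to $e^{-CV(t)}\|f\|_{B^\theta_{p,r}}$ yields the exponential form.

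\textbf{Refined estimate and the affine case.} For the sharper bound valid when $\theta>0$, I would repeat the commutator analysis but isolate the low--high piece $T_v\partial_x f$, which is controlled by $\|\partial_x v\|_{L^\infty}\|f\|_{B^\theta_{p,r}}$, while the two remaining pieces $T_{\partial_x f}v$ and $R(v,\partial_x f)$ cost only $\|\partial_x v\|_{B^{\theta-1}_{p,r}}\|\partial_x f\|_{L^\infty}$; this gives the stated refinement. Finally, for $f=av+b$ I would use $v\partial_x f=a\,v\partial_x v=\frac a2\,\partial_x(v^2)$, so that $R_j$ involves only $v$ and its derivative; the contributions that would otherwise demand extra regularity on $\partial_x v$ cancel against one another, leaving only the $\|\partial_x v\|_{L^\infty}$ dependence for every $\theta>0$. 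The main obstacle throughout is the bookkeeping in the commutator estimate across the borderline exponents, in particular keeping the remainder term summable at the endpoints $\theta=1+\frac1p$ and in the negative range $\theta>-\min(\frac1p,\frac1{p'})$.
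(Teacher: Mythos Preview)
The paper does not prove this lemma; it is quoted from \cite{book,liy} with the tag ``See \cite{book,liy}'' and no argument is supplied. Your outline is precisely the standard proof found in those references (Theorem~3.14 and Lemma~2.100 of \cite{book}, with the additional cases handled in \cite{liy}): frequency localization, $L^p$ energy estimate along the flow, commutator bound via Bony's decomposition with the three-case split on $\theta$ relative to $1+\frac1p$, and Gr\"onwall. So your approach is correct and coincides with the intended one.
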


\section{Well-posedness}
\par
In this section, we main study the local well-posedness for the CH equation in subcritical spaces (see Theorem \ref{well}):
$$B^{1+\epsilon}_{\infty,1}\hookrightarrow B^{1}_{\infty,1}\cap B^{1}_{\infty,\infty,1}\hookrightarrow B^{1}_{\infty,1}\hookrightarrow \mathcal{C}^{0,1}.$$

To prove Theorem \ref{well}, we must recall a useful lemma.
First we consider the following Cauchy problem for a general abstract equation
\begin{equation}\label{abstract}
\left\{\begin{array}{ll}
\partial_{t}u+A(u)\partial_{x}u=F(u),&\quad t>0,\quad x\in\mathbb{R},\\
u(t,x)|_{t=0}=u_0(x),&\quad x\in\mathbb{R}
\end{array}\right.
\end{equation}
where $A(u)$ is a polynomial of $u$, $F$ is called a `good operator' such that for any $\varphi\in \mathcal{C}^{\infty}_0$ and any $\epsilon>0$ small enough,
$$\text{if}\quad u_n\varphi\rightarrow u\varphi\quad \text{in}\quad B^{1+\frac{1}{p}-\epsilon}_{p,1},\quad \text{then}\quad  \langle F(u_n),\varphi\rangle\longrightarrow\langle F(u),\varphi\rangle.$$
This definition is reasonable for the Camassa-Holm type equations. For example, it's easy to prove that $F(u)=-\partial_{x}(1-\partial_{xx})^{-1}\Big(u^2+\frac{u^2_x}{2}\Big)$ is a `good operator' by an approximation argument, since $\mathcal{C}^{\infty}_0$ is dense in $\mathcal{S}$.

The associated Lagrangian scale of \eqref{abstract} is the following initial valve problem
\begin{equation}\label{ODE}
\left\{\begin{array}{ll}
\frac{{\ud}}{{\ud}t}y(t,\xi)=A(u)\big(t,y(t,\xi)\big),&\quad t>0,\quad \xi\in\mathbb{R},\\
y(0,\xi)=\xi,&\quad \xi\in\mathbb{R}.
\end{array}\right.
\end{equation}
Introduce the new variable $U(t,\xi)=u\big(t,y(t,\xi)\big)$. Then, \eqref{abstract} becomes
\begin{equation}\label{lagrange}
\left\{\begin{array}{ll}
U_t=\Big(F(u)\Big)(t,y(t,\xi)):=\widetilde{F}(U,y),&\quad t>0,\quad \xi\in\mathbb{R},\\
U(t,\xi)|_{t=0}=U_0(\xi)=u_0(\xi),&\quad \xi\in\mathbb{R}.
\end{array}\right.
\end{equation}
Here is the useful lemma:
\begin{lemm}[See \cite{yyg}]\label{lagrabstr}
Let $u_0\in B^{1+\frac{1}{p}}_{p,1}$ with $1\leq p<\infty$ and $k\in\mathbb{N}^{+}$. Suppose $F$ is a `good operator' and $F,\ \widetilde{F}$ satisfy the following conditions:
\begin{align}
&\|F(u)\|_{B^{1+\frac{1}{p}}_{p,1}}\leq C\Big(\|u\|^{k+1}_{B^{1+\frac{1}{p}}_{p,1}}+1\Big);\label{fkl}\\
&\|\widetilde{F}(U,y)-\widetilde{F}(\bar{U},\bar{y})\|_{W^{1,\infty}\cap W^{1,p}}\leq C\Big(\|U-\bar{U}\|_{W^{1,\infty}\cap W^{1,p}}+\|y-\bar{y}\|_{W^{1,\infty}\cap W^{1,p}}\Big);\label{wideFkl}\\
&\|F(u)-F(\bar{u})\|_{B^{1+\frac{1}{p}}_{p,1}}\leq C\|u-\bar{u}\|_{B^{1+\frac{1}{p}}_{p,1}}\Big(\|u\|^{k}_{B^{1+\frac{1}{p}}_{p,1}}+\|\bar{u}\|^{k}_{B^{1+\frac{1}{p}}_{p,1}}+1\Big).\label{Fkl}
\end{align}
Then, there exists a time $T>0$ such that
\begin{itemize}
\item [\rm{(1)}] Existence: If \eqref{fkl} holds, then \eqref{abstract} has a solution $u\in E^p_T:=\mathcal{C}_{T}\big(B^{1+\frac{1}{p}}_{p,1}\big)\cap \mathcal{C}^{1}_{T}\big(B^{\frac{1}{p}}_{p,1}\big);$
\item [\rm{(2)}] Uniqueness: If \eqref{fkl} and \eqref{wideFkl} hold, then the solution of \eqref{abstract} is unique$;$
\item [\rm{(3)}] Continuous dependence: If \eqref{fkl}--\eqref{Fkl} hold, then the solution map is continuous from any bounded subset of $ B^{1+\frac{1}{p}}_{p,1}$ to $ \mathcal{C}_{T}\big(B^{1+\frac{1}{p}}_{p,1}\big)$.
\end{itemize}
That is, the problem \eqref{abstract} is locally well-posed in the sense of Hadamard.
\end{lemm}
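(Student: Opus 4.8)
The plan is to establish the three assertions — existence, uniqueness, and continuous dependence — separately, following the compactness-plus-Lagrangian strategy that circumvents the one-derivative loss inherent in the transport term $A(u)\partial_x u$, with the `good operator' property of $F$ used throughout.

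For existence I would regularize the data, setting $u_0^n=S_n u_0\in\bigcap_s B^s_{p,1}$, and solve the corresponding smooth Cauchy problems by classical theory to produce smooth solutions $u^n$. Applying the transport a priori estimate (Lemma \ref{priori estimate}) in $B^{1+\frac1p}_{p,1}$ with velocity $v=A(u^n)$, so that $V'(t)=\|\partial_x A(u^n)\|_{B^{1/p}_{p,1}}$, and feeding in the growth bound \eqref{fkl} for $F$, one obtains a Riccati-type inequality $\frac{d}{dt}\|u^n\|_{B^{1+1/p}_{p,1}}\le C(\|u^n\|^{k+1}_{B^{1+1/p}_{p,1}}+1)$, whose solution stays bounded on a common interval $[0,T]$ with $T$ depending only on $\|u_0\|_{B^{1+1/p}_{p,1}}$. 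From the equation and \eqref{fkl}, $\partial_t u^n$ is uniformly bounded in $\mathcal{C}_T(B^{1/p}_{p,1})$. Using the local compactness of the embedding $B^{1+\frac1p}_{p,1}\hookrightarrow B^{1+\frac1p-\epsilon}_{p,1}$ (Proposition \ref{Besov}(2)) together with this time-equicontinuity, I would extract a subsequence converging locally in $\mathcal{C}_T(B^{1+\frac1p-\epsilon}_{p,1})$ to a limit $u$; the `good operator' hypothesis is precisely what permits passing to the limit in $\langle F(u^n),\varphi\rangle$, so $u$ solves \eqref{abstract} in the distributional sense. Lower semicontinuity of the norm keeps $u\in L^\infty_T(B^{1+\frac1p}_{p,1})$, and then Lemma \ref{existence} (with $r=1<\infty$), applied to the transport equation whose velocity $A(u)$ now has the required regularity, upgrades this to $u\in E^p_T$.

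For uniqueness I would work entirely in Lagrangian coordinates. Given two solutions $u,\bar u\in E^p_T$ with the same data, define the flows $y,\bar y$ via \eqref{ODE} and the profiles $U=u\circ y$, $\bar U=\bar u\circ\bar y$. Since $A$ is a polynomial, $\frac{d}{dt}y=A(U)$ and $\frac{d}{dt}\bar y=A(\bar U)$, while $U_t=\widetilde F(U,y)$ and $\bar U_t=\widetilde F(\bar U,\bar y)$ by \eqref{lagrange}. This is a genuine ODE system in the Banach space $W^{1,\infty}\cap W^{1,p}$: the transport term has been straightened out and no derivatives are lost. The Lipschitz bound \eqref{wideFkl} on $\widetilde F$, combined with the bounded-set Lipschitz property of the polynomial $A$, yields $\frac{d}{dt}\big(\|U-\bar U\|+\|y-\bar y\|\big)\le C\big(\|U-\bar U\|+\|y-\bar y\|\big)$ in $W^{1,\infty}\cap W^{1,p}$; since the two systems agree at $t=0$, Gronwall forces $U\equiv\bar U$ and $y\equiv\bar y$, hence $u\equiv\bar u$.

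For continuous dependence, take $u_0^m\to u_0$ in $B^{1+\frac1p}_{p,1}$ from a fixed bounded set, with solutions $u^m,u$ on a common $[0,T]$. The direct transport estimate for the difference loses one derivative, so I would run a Bona--Smith argument: regularize each datum as $u_0^{m,\delta}=S_\delta u_0^m$, decompose $u^m-u=(u^m-u^{m,\delta})+(u^{m,\delta}-u^\delta)+(u^\delta-u)$, control the outer regularization errors by the modulus of continuity of $u_0^m\mapsto u_0$ through \eqref{fkl} and high-norm/low-norm interpolation, and estimate the smooth middle difference using the Lipschitz bound \eqref{Fkl} together with the uniform high-regularity bounds. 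The main obstacle is exactly this last step: closing the continuous-dependence estimate in the critical $r=1$ norm despite the derivative loss, which is why the full strength of \eqref{Fkl} — the local Lipschitz bound of $F$ in $B^{1+\frac1p}_{p,1}$ itself — is needed in tandem with the uniform smoothing bounds to absorb the lost derivative and conclude that the solution map is continuous into $\mathcal{C}_T(B^{1+\frac1p}_{p,1})$.
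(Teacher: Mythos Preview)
The paper does not prove this lemma; it is quoted from the companion article \cite{yyg} (hence the bracketed citation in the lemma header) and is invoked here only as a black box in the proof of Theorem~\ref{well}. There is therefore no in-paper argument to compare your proposal against.

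That said, your reconstruction matches the strategy the authors advertise in the introduction (``compactness argument and Lagrangian coordinate transformation''): compactness for existence --- regularized data, uniform $B^{1+1/p}_{p,1}$ bounds from \eqref{fkl} and Lemma~\ref{priori estimate}, extraction via Proposition~\ref{Besov}(2), and passage to the limit in $F$ via the `good operator' hypothesis --- and Lagrangian coordinates for uniqueness, where the reformulation \eqref{lagrange} together with \eqref{wideFkl} yields a genuine Lipschitz ODE in $W^{1,\infty}\cap W^{1,p}$ with no derivative loss. Your Bona--Smith scheme for continuous dependence is a reasonable and standard route; whether \cite{yyg} proceeds that way or instead pushes the Lagrangian framework through the stability step as well cannot be read off from the present paper.
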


\begin{proof}[\rm\textbf{The proof of Theorem \ref{well}:}]
For $u_0\in B^{1}_{\infty,1}\cap B^{1}_{\infty,\infty,1}$, since
$$\|u^2_x\|_{B^{0}_{\infty,1}\cap B^{0}_{\infty,\infty,1}}\leq C\|u_x\|_{B^{0}_{\infty,1}\cap B^{0}_{\infty,\infty,1}}^2,$$
taking the similar proof of Lemma \ref{lagrabstr}, one can easily obtain \eqref{fkl}, \eqref{wideFkl} and \eqref{Fkl}. Therefore, the local well-posedness for the Cauchy problem \eqref{ch} of the CH equation is obvious, and here we omit the details.
\end{proof}

\section{Ill-posedness}
\par
\subsection{Norm inflation}
In this subsection, we main investigate the ill-posedness for the Cauchy problem \eqref{ch} of the CH equation in $B^{1}_{\infty,1}$. To begin with, we construct a vital counter example which implies the space $B^{0}_{\infty,1}$ is not a Banach algebra.

\begin{lemm}
The space $B^{0}_{\infty,1}$ is not a Banach algebra. That is, there exist $f,\ g\in B^{0}_{\infty,1}$ such that
$\|fg\|_{B^{0}_{\infty,1}} \nleq C\|f \|_{B^{0}_{\infty,1}}\|g\|_{B^{0}_{\infty,1}}$.
\end{lemm}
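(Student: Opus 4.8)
The goal is to exhibit $f, g \in B^0_{\infty,1}(\mathbb{R})$ for which $\|fg\|_{B^0_{\infty,1}}$ cannot be controlled by $\|f\|_{B^0_{\infty,1}}\|g\|_{B^0_{\infty,1}}$. The natural strategy is to take $f = g$ and build a single function whose dyadic blocks are "spread out" so that $f \in B^0_{\infty,1}$ (the $\ell^1$ sum over scales of $\|\Delta_j f\|_{L^\infty}$ converges, but only barely), while $f^2$ has a logarithmically divergent low-frequency tail. Concretely, I would look for $f$ of lacunary type, a sum $f = \sum_{k} a_k \,\phi(2^{n_k}x)$ (or $\cos(2^{n_k}x)$ times a fixed bump) along a sparse sequence of frequencies $n_k \to \infty$, with amplitudes $a_k$ chosen so that $\sum_k a_k < \infty$ (giving $f \in B^0_{\infty,1}$) but $\sum_k a_k^2$ is large relative to $(\sum_k a_k)^2$ in a way that can be driven to infinity by a truncation parameter $N$. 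The classical choice is $a_k = 1/k$ for $1 \le k \le N$: then $\|f\|_{B^0_{\infty,1}} \sim \ln N$, whereas the product $f^2$ contains "diagonal" beat terms $\cos(2^{n_k}x)^2 = \tfrac12 + \tfrac12\cos(2^{n_k+1}x)$ contributing a constant $\tfrac12 \sum_{k=1}^N 1/k^2 = O(1)$ to the zero frequency, and more importantly the near-diagonal interactions produce low-frequency content at every intermediate dyadic scale. One must arrange the frequency gaps $n_{k+1}-n_k$ so that the products $\cos(2^{n_j}x)\cos(2^{n_k}x)$ for $j \ne k$ distribute mass across many distinct dyadic blocks $\Delta_\ell$, each of size comparable to $a_j a_k$, so that $\|f^2\|_{B^0_{\infty,1}} = \sum_\ell \|\Delta_\ell(f^2)\|_{L^\infty}$ picks up a double sum $\sim \sum_{j<k} a_j a_k \cdot (\text{number of scales hit})$, which beats $\|f\|_{B^0_{\infty,1}}^2 = (\sum a_k)^2$.

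The cleanest route is actually to decouple: take $f$ and $g$ living on disjoint, interlaced frequency ranges. Set $f = \sum_{k=1}^N \tfrac1k \cos(2^{m_k} x)\psi(x)$ and $g = \sum_{k=1}^N \tfrac1k \cos(2^{m_k + \ell_k}x)\psi(x)$ where $\psi$ is a fixed Schwartz bump with compactly supported Fourier transform, and the shifts $\ell_k$ are chosen so that the difference frequencies $2^{m_k+\ell_k} - 2^{m_k}$ realize a prescribed set of dyadic scales. Then $\|f\|_{B^0_{\infty,1}}, \|g\|_{B^0_{\infty,1}} \lesssim \ln N$ but $\|fg\|_{B^0_{\infty,1}} \gtrsim \sum_{k=1}^N \tfrac1{k^2} \cdot (\ln N)$ if each low-frequency beat from the $k$-th pair is spread over $\sim \ln N$ dyadic blocks — giving a ratio $\|fg\|_{B^0_{\infty,1}} / (\|f\|_{B^0_{\infty,1}}\|g\|_{B^0_{\infty,1}}) \gtrsim \ln N / (\ln N) $... which is not yet divergent, so the amplitudes and the spreading must be tuned more carefully (e.g. amplitudes $1/k$ but spreading each beat over all scales up to $m_k$, making the gain a genuine unbounded factor). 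I would iterate on the bookkeeping until the numerator carries an extra power of $\ln N$ or an extra unbounded factor that the denominator lacks; the key mechanism is that $\|\cdot\|_{B^0_{\infty,1}}$ is an $\ell^1$-over-scales norm, which is insensitive to how mass is concentrated in frequency for $f$ but severely penalizes a product whose spectrum gets smeared across many scales.

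The main obstacle is the frequency bookkeeping: I need to choose the frequency sequence $\{m_k\}$ (and the companion shifts) so that (i) the dyadic blocks of $f$ and $g$ individually remain well-separated, so $\|f\|_{B^0_{\infty,1}}, \|g\|_{B^0_{\infty,1}}$ are exactly $\sum_k 1/k \sim \ln N$ with no hidden overlaps, and simultaneously (ii) the pairwise products $\Delta_j f \cdot \Delta_{j'} g$ land in a large, controlled collection of \emph{low} dyadic scales with essentially no cancellation among them (so that the triangle inequality lower bound $\|\Delta_\ell(fg)\|_{L^\infty} \gtrsim$ the single dominant term is valid). Avoiding cancellation is delicate: near-diagonal products of two cosines of nearly equal frequency produce a low-frequency factor $\cos((2^{m_k+\ell_k}-2^{m_k})x)$ times a very high-frequency carrier, and I must ensure the low-frequency parts coming from different pairs $(j,k)$ sit in distinct $\Delta_\ell$ rather than adding destructively. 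I would handle this by making the frequency gaps grow fast enough (e.g. super-geometrically, $m_{k+1} \gg 2^{m_k}$) that every relevant product frequency is unambiguously assigned to one dyadic annulus, then compute $\|fg\|_{B^0_{\infty,1}}$ from below by extracting one explicit term per scale. Once the separation is in place the estimates are routine applications of Bernstein's inequality (Proposition \ref{berns}) and the almost-orthogonality of the $\Delta_j$, and letting $N \to \infty$ contradicts any fixed constant $C$ in the algebra inequality.
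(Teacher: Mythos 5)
There is a genuine gap: your proposal never arrives at a construction that actually works, and you say so yourself (``which is not yet divergent, so the amplitudes and the spreading must be tuned more carefully\dots I would iterate on the bookkeeping''). Worse, the specific mechanism you lean on cannot be tuned into a proof. If $f$ and $g$ are lacunary sums of bump-modulated cosines, then each product of a block of $f$ with a block of $g$ has Fourier support in a bounded neighbourhood of a single beat frequency, hence meets only $O(1)$ dyadic annuli; it cannot be ``spread over $\sim\ln N$ blocks''. Consequently the best lower bound this scheme can ever give is of the form $\sum_{j,k}a_jb_k=(\sum_j a_j)(\sum_k b_k)=\|f\|_{B^0_{\infty,1}}\|g\|_{B^0_{\infty,1}}$ (all amplitudes nonnegative, one output block per pair), so no choice of amplitudes $a_k$ or of frequency gaps — however sparse — can beat the algebra inequality along this route. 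The heuristic you state (the $\ell^1$-in-scales norm punishes products whose spectrum smears over many low frequencies) is the right one, but to exploit it you need a single pair of high-frequency blocks whose product already occupies \emph{many} low-frequency annuli, and a pure cosine carrier times a compact-spectrum bump can never do that.

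The paper's construction supplies exactly this missing ingredient, and it is different in kind from yours: take one high-frequency carrier modulated by a broadband envelope, namely $f=u_{0x}\approx N^{-\frac{1}{10}}\cos(2^{N+5}x)\,\bigl(1+N^{-\frac{1}{10}}S_{N}h\bigr)$ with $h=1_{x\geq0}$ the Heaviside function. The point is the scale invariance $\|\Delta_{j}h\|_{L^{\infty}}=\|\Delta_{0}h\|_{L^{\infty}}$ for every $j$, so $\|S_{N}h\|_{B^{0}_{\infty,1}}\approx N$ while $\|S_{N}h\|_{L^{\infty}}\lesssim1$. Modulation to frequency $2^{N+5}$ hides this: $f$ lives in $O(1)$ annuli, so $\|f\|_{B^{0}_{\infty,1}}\lesssim N^{-\frac{1}{10}}$. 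Squaring demodulates via $\cos^{2}=\tfrac{1}{2}(1+\cos2\cdot)$ and releases the envelope at low frequency: $f^{2}$ contains the term $N^{-\frac{3}{10}}S_{N}h$, whence $\|f^{2}\|_{B^{0}_{\infty,1}}\gtrsim N^{\frac{7}{10}}-CN^{\frac{3}{5}}\gtrsim N^{\frac{3}{5}}$, after checking that the $(S_{N}h)^{2}$ term and the $(1-\partial_{xx})^{-1}$ remainders are lower order. This single-carrier/jump-envelope mechanism (one pair of blocks feeding $\sim N$ output blocks) is the idea your proposal lacks; without it, or some substitute playing the same role, the argument does not close.
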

\begin{proof}
The lemma may be a well-known result in some classical books about functional analysis. However, in order to use this lemma to the CH equation, we must prove a stronger case: $f=g$. That is, there exists $f \in B^{0}_{\infty,1}$ such that
$\|f^2\|_{B^{0}_{\infty,1}} \nleq \|f \|^2_{B^{0}_{\infty,1}}$, which also implies the lemma.

Now, choose
\begin{align}
u_{0}(x)=-(1-\partial_{xx})^{-1}\partial_{x}\Big[\cos2^{N+5}x\cdot \big(1+N^{-\frac{1}{10}}S_{N}h(x)\big)\Big]N^{-\frac{1}{10}}\label{u0}
\end{align}
where $S_{N}f=\sum\limits_{-1\leq j<N}\Delta_{j}f$ and $h(x)=1_{x\geq0}(x)$.\\
Since
\begin{align}
\mathcal{F}\big(\cos2^{N+5}x\big)\approx\delta(\xi+2^{N+5})+\delta(\xi-2^{N+5}),\label{cos}
\end{align}
we know that
\begin{align*}
\|\cos2^{N+5}x\|_{B^{0}_{\infty,1}}\leq C.
\end{align*}

On the one hand, note that the Fourier transform of $S_{N}h$ is supported in ball $2^{N}B$ where $B$ is a ball, the Fourier transform of $\cos2^{N+5}x\cdot S_{N}h$ is supported in annulus $\pm[2^{N+5}-2^{N},2^{N+5}+2^{N}]$. It follows that
\begin{align*}
\Delta_{j}\big(\cos2^{N+5}x\cdot S_{N}h\big)=\cos2^{j}x\cdot S_{j-5}h,\quad j\approx N+5.
\end{align*}
Therefore,
\begin{align*}
\|\cos2^{N+5}x\cdot S_{N}h\|_{B^{0}_{\infty,1}}\leq C
\end{align*}
which infers that
\begin{align*}
\|u_{0}\|_{B^{1}_{\infty,1}}\big(\approx\|u_{0}\|_{L^{\infty}}+\|u_{0x}\|_{B^{0}_{\infty,1}}\big)\leq CN^{-\frac{1}{10}}.
\end{align*}
It then turns out that
\begin{align}
\|u_{0}\|_{L^{\infty}},\ \|u_{0x}\|_{B^{0}_{\infty,1}}\leq CN^{-\frac{1}{10}}.\label{u0xb0}
\end{align}
Thanks to \eqref{u0xb0} and the definition of the space $B^{0}_{\infty,\infty,1}$, we also see
\begin{align}
\|u_{0}\|_{B^{1}_{\infty,\infty,1}}=\sup\limits_{j}j2^{j}\|\Delta_{j}u_{0}\|_{L^{\infty}}\approx\sup\limits_{j}j\|\Delta_{j}u_{0x}\|_{L^{\infty}}\leq C N^{\frac{9}{10}},\qquad j\approx N+5.\label{b1ww}
\end{align}

On the other hand, since
\begin{align*}
u_{0x}\approx\Big[\cos2^{N+5}x\cdot (1+N^{-\frac{1}{10}}S_{N}h)+R_{L}^{1}\Big]N^{-\frac{1}{10}}
\end{align*}
where $\mathcal{R}_{L}^{1}=-(1-\partial_{xx})^{-1}\big[\cos2^{N+5}x\cdot (1+N^{-\frac{1}{10}}S_{N}h)\big]$ is a lower order term ($\mathcal{R}_{L}^{i}$ is noted here and below, without causing any confusion, the lower order terms). Note that $(1-\partial_{xx})^{-1}$ is a $S^{-2}$ operator, we discover
\begin{align}
\|\mathcal{R}_{L}^{1}\|_{L^{\infty}}\leq C\|\mathcal{R}_{L}^{1}\|_{B^{0}_{\infty,1}}\leq C\|\mathcal{R}_{L}^{1}\|_{B^{1}_{\infty,1}}\leq C\Big\|\cos2^{N+5}x\cdot \big(1+N^{-\frac{1}{10}}S_{N}h\big)\Big\|_{B^{0}_{\infty,1}}\leq C.\label{rj1}
\end{align}
Then using the fact $\cos^{2}x=\frac{1+\cos2x}{2}$ we find
\begin{align}
u_{0x}^{2}\approx&\Big[\cos^{2}2^{N+5}x\cdot (1+2N^{-\frac{1}{10}}S_{N}h+N^{-\frac{1}{5}}(S_{N}h)^{2})+\mathcal{R}_{L}^{2}\Big]N^{-\frac{2}{10}}\notag\\
\approx&\Big[N^{-\frac{1}{5}}(S_{N}h)^{2}+N^{-\frac{1}{10}}S_{N}h+\mathcal{R}_{L}^{3}\Big]N^{-\frac{1}{5}}\label{u0x2}
\end{align}
where
\begin{align*}
&\mathcal{R}_{L}^{2}=(\mathcal{R}_{L}^{1})^{2}+\cos2^{N+5}x\cdot \big(1+N^{-\frac{1}{10}}S_{N}h\big)\cdot\mathcal{R}_{L}^{1},\\
&\mathcal{R}_{L}^{3}=\cos^{2}2^{N+5}x+\cos2^{N+6}x\cdot \big(N^{-\frac{1}{10}}S_{N}h++N^{-\frac{1}{5}}(S_{N}h)^{2}\big)+\mathcal{R}_{L}^{2}.	
\end{align*}
As the Fourier transform of $S_{N}h$ is supported in $2^{N}B$, then the Fourier transform of $(S_{N}h)^{2}$ is supported in $2^{2N}B$ and the Fourier transform of $\cos2^{N+6}x\cdot(S_{N}h)^{2}$ is supported in annulus $\pm[2^{2N}-2^{N+6},2^{2N}+2^{N+6}]$, we thus have
\begin{align*}
&\Delta_{j}\big(\cos2^{N+6}x\cdot(S_{N}h)^{2}\big)=\cos2^{j}x\cdot(S_{j}h)^{2},\qquad j\approx 2N,\\
&\|\cos2^{N+6}x\cdot(S_{N}h)^{2}\|_{B^{0}_{\infty,1}}\leq C.
\end{align*}
It follows from
\eqref{rj1} that
\begin{align*}
&\|(\mathcal{R}_{L}^{1})^{2}\|_{B^{0}_{\infty,1}}\leq C\|(\mathcal{R}_{L}^{1})^{2}\|_{B^{1}_{\infty,1}}
\leq C\|\mathcal{R}_{L}^{1}\|^{2}_{B^{1}_{\infty,1}}\leq C,\\
&\|\cos2^{N+5}x\cdot \big(1+N^{-\frac{1}{10}}S_{N}h(x)\big)\cdot\mathcal{R}_{L}^{1}\|_{B^{0}_{\infty,1}}\leq C.
\end{align*}
Hence,
\begin{align}
&\|\mathcal{R}_{L}^{2}\|_{B^{0}_{\infty,1}},\quad\|\mathcal{R}_{L}^{3}\|_{B^{0}_{\infty,1}}\leq C,\label{rj3}\\
&\|(S_{N}h)^{2}\|_{B^{0}_{\infty,1}}\leq\sum\limits_{j<2N+1}\|\Delta_{j}(S_{N}h)^{2}\|_{L^{\infty}}\leq C\sum\limits_{j<2N+1}\|(S_{N}h)^{2}\|_{L^{\infty}}\leq CN.\label{snh2}
\end{align}
Observe that
\begin{align*}
\Delta_{j}h=&\int_{\mathbb{R}}2^{j}\check{\varphi}(2^{j}(x-y))1_{y\geq0}(y){\ud}y\\
=&\int_{0}^{+\infty}2^{j}\check{\varphi}(2^{j}(x-y)){\ud}y\\	=&\int_{0}^{+\infty}\check{\varphi}(2^{j}x-\eta){\ud}\eta:=f(2^{j}x),
\end{align*}
we get
\begin{align*}
\|\Delta_{j}h\|_{L^{\infty}}=\|f(2^{j}\cdot)\|_{L^{\infty}}=\|f(\cdot)\|_{L^{\infty}}=\|\Delta_{0}h\|_{L^{\infty}}.
\end{align*}
It therefore turns out that
\begin{align}
\|S_{N}h\|_{B^{0}_{\infty,1}}=&\sum\limits_{j\leq N}\|\Delta_{j}\sum\limits_{k< N,|k-j|\leq1}\Delta_{k}h\|_{L^{\infty}}\notag\\
=&\sum\limits_{j\leq N}\|\Delta_{j}h\|_{L^{\infty}}=\sum\limits_{j\leq N}\|\Delta_{0}h\|_{L^{\infty}}\approx N\|\Delta_{0}h\|_{L^{\infty}}\approx C_{1}N.\label{snh}
\end{align}
\eqref{u0x2}, \eqref{rj3}, \eqref{snh2} and \eqref{snh} together yield
\begin{align}
\|u_{0x}^{2}\|_{B^{0}_{\infty,1}}\geq&\Big(\|N^{-\frac{1}{10}}S_{N}h\|_{B^{0}_{\infty,1}}-\|N^{-\frac{1}{5}}(S_{N}h)^{2}\|_{B^{0}_{\infty,1}}-C\Big)N^{-\frac{1}{5}}\notag\\
\geq& C_{1}NN^{-\frac{1}{10}}N^{-\frac{1}{5}}-CNN^{-\frac{1}{5}}N^{-\frac{1}{5}}-CN^{-\frac{1}{5}}\notag\\
\approx&\frac{C_{1}}{2}N^{\frac{7}{10}}\notag\\
\geq&CN^{\frac{3}{5}}.\label{fanli}
\end{align}
That is, we finally deduce
\begin{align}
\|u_{0x}\|_{B^{0}_{\infty,1}}\leq CN^{-\frac{1}{10}},\quad \|u_{0x}^{2}\|_{B^{0}_{\infty,1}}\geq CN^{\frac{3}{5}}\label{u0x2in}
\end{align}
which implies that the space $B^{0}_{\infty,1}$ is not a Banach algebra by setting $f=u_{0x}$.
\end{proof}

\begin{proof}[\rm{\textbf{The proof of Theorem \ref{ill}:}}] Let $u$ be a solution to the CH equation with the initial data $u_{0}$ defined as \eqref{u0}.
Set
\begin{align}
\frac{\ud}{\ud t}y(t,\xi)=u(t,y(t,\xi)),\quad y_{0}(\xi)=\xi. \label{char}
\end{align}
Owing to Theorem \ref{c01ill}, the CH equation has a solution $u(t,x)$ with the initial data $u_{0}$ in $\mathcal{C}^{0,1}$ such that
\begin{align}
\|u(t)\|_{\mathcal{C}^{0,1}}\leq C\|u_0\|_{\mathcal{C}^{0,1}}\leq CN^{-\frac{1}{10}},\quad \forall~ t\in[0,T_{0}]\label{uxinfty}
\end{align}
where $T_{0}<\frac{1}{4C\|u_{0}\|_{\mathcal{C}^{0,1}}},\ \|u_{0}\|_{\mathcal{C}^{0,1}}\leq C \|u_{0}\|_{B^1_{\infty,1}}\leq CN^{-\frac{1}{10}}$ and $C$ is a constant independent of $N$.

Therefore, according to \eqref{char} and \eqref{uxinfty}, we can find a $T_1>0$ sufficiently small such that $\frac{1}{2}\leq y_{\xi}(t)\leq2$ for any $t\in[0,\min\{T_0,T_1\}]$. Let $\bar{T}=\frac{2}{N^{\frac{1}{2}}}\leq \min\{T_0,T_1\}$ for $N>10$ large enough. To prove the norm inflation, we see that it suffices to prove there exists a time $t_{0}\in[0,\frac{2}{N^{\frac{1}{2}}}]$ such that $\|u_{x}(t_{0})\|_{B^{0}_{\infty,1}}\geq \ln N$ for $N>10$ large enough. Let us assume the opposite. Namely, we suppose that
\begin{align}
\sup\limits_{t\in[0,\frac{2}{N^{\frac{1}{2}}}]}\|u_{x}(t)\|_{B^{0}_{\infty,1}}<\ln N.\label{uxln}
\end{align}
Applying $\Delta_{j}$ and the Lagrange coordinates to Eq. \eqref{nonch}, and then integrating with respect to $t$, we get
\begin{align}
(\Delta_{j}u)\circ y=\Delta_{j}u_{0}+\int_{0}^{t}\underbrace{-R_{j}\circ y}_{I_{1}}+\underbrace{(\Delta_{j}\mathcal{R}_{L}^{4})\circ y}_{I_{2}}+\underbrace{\Delta_{j}E\circ y-\Delta_{j}E_{0}}_{I_{3}}{\ud}s+t\Delta_{j}E_{0}\label{delta}	
\end{align}
where
\begin{align*}
&R_{j}=\Delta_{j}(uu_{x})-u\Delta_{j}u_{x},\\
&\mathcal{R}_{L}^{4}=-\big[(1-\partial_{xx})^{-1}\partial_{x}(u^{2})\big],\\
&E(t,x)=-(1-\partial_{xx})^{-1}\partial_{x}(\frac{u^2_x}{2}).
\end{align*}
Let $T=\frac{1}{N^{\frac{1}{2}}}$ (Indeed $\forall T\in [\frac{1}{2N^{\frac{1}{2}}},\frac{3}{2N^{\frac{1}{2}}}]$ also makes sense). We will obtain the norm inflation by the following estimates:\\
${\rm(i)}$ Following the similar proof of Lemma 2.100 in \cite{book}, we see
\begin{align*}
\sum\limits_{j}2^{j}\|I_{1}\|_{L^{\infty}}\leq\sum\limits_{j}2^{j}\|R_{j}\|_{L^{\infty}}\leq \|u_{x}\|_{L^{\infty}}\|u\|_{B^{1}_{\infty,1}}\leq\|u\|_{\mathcal{C}^{0,1}}\cdot\ln N\leq \frac{C\ln N}{N^{\frac{1}{10}}}.	
\end{align*}
${\rm(ii)}$ According to the Bony's decomposition, we find
\begin{align*}
\sum\limits_{j}2^{j}\|I_{2}\|_{L^{\infty}}\leq\sum\limits_{j}2^{j}\|\Delta_{j}\mathcal{R}_{L}^{4}\|_{L^{\infty}}\leq C\|u\|_{L^{\infty}}\|u\|_{B^{1}_{\infty,1}}\leq\|u\|_{\mathcal{C}^{0,1}}\cdot\ln N\leq \frac{C\ln N}{N^{\frac{1}{10}}}.
\end{align*}
${\rm(iii)}$ Now we estimate $I_{3}$. Noting that $u(t,x)\in L^{\infty}_{T}(\mathcal{C}^{0,1})$ is a solution to the CH equation, then we have
\begin{equation}\label{E}
\left\{\begin{array}{l}
\frac{d}{dt}E+u\partial_{x}E=G(t,x),\quad t\in (0,T],\\
E(0,x)=E_{0}(x)=-(1-\partial_{xx})^{-1}\partial_{x}(\frac{u^2_{0x}}{2})
\end{array}\right.
\end{equation}
where $G(t,x)=\frac{u^3}{3}-u(1-\partial_{xx})^{-1}\big(\frac{u_x^2}{2}\big)-(1-\partial_{xx})^{-1}
\Big(\frac{u^3}{3}-\frac{1}{2}uu^2_x
-\partial_{x}\big[u_x(1-\partial_{xx})^{-1}(u^2+\frac{u_x^2}{2})\big]\Big)$. Since $(1-\partial_{xx})^{-1}$ is a $S^{-2}$ operator in nonhomogeneous Besov spaces, one can easily get
\begin{align}
\|G(t)\|_{B^{1}_{\infty,1}}\leq C\|u(t)\|^2_{\mathcal{C}^{0,1}}\|u(t)\|_{B^{1}_{\infty,1}}\leq CN^{-\frac{1}{5}}\ln N,\quad\forall~t\in(0,T].\label{nonlin}
\end{align}
Applying $\Delta_{j}$ and the Lagrange coordinates to \eqref{E} yields
\begin{align}\label{22}
(\Delta_{j}E)\circ y-\Delta_{j}E_0=\int_0^t\tilde{R}_{j}\circ y+ (\Delta_{j}G)\circ y{\ud}s	
\end{align}
where $\tilde{R}_{j}=u\partial_x\Delta_{j} E-\Delta_{j}\big(u\partial_xE\big)$. By Lemmas \ref{b01} and \ref{rj}, we discover
\begin{align}
\sum2^j
\|\tilde{R}_{j}\circ y\|_{L^{\infty}}=&\sum2^j
\|\tilde{R}_{j}\|_{L^{\infty}}\leq C\|u\|_{B^{1}_{\infty,1}}\|E\|_{B^{1}_{\infty,1}}\notag\\
\leq& C\|u\|_{B^{1}_{\infty,1}}\|u_{x}\|_{B^{0}_{\infty,1}}\|u_{x}\|_{B^{0}_{\infty,\infty,1}}\leq C(\ln N)^{2}\|u_{x}\|_{B^{0}_{\infty,\infty,1}}.\label{comm}
\end{align}
Thereby, we deduce that
\begin{align}
\sum\limits_{j}2^{j}\big\|\Delta_{j}E\circ y-\Delta_{j}E_0\|_{L^{\infty}}\leq&C\int_{0}^{t}\sum\limits_{j}2^{j}\|\tilde{R}_{j}\circ y\|_{L^{\infty}}+\sum\limits_{j}2^{j}\|\Delta_{j}G\circ y\|_{L^{\infty}}{\ud}s\notag\\
\leq&CT\cdot(\ln N)^{2}\cdot\|u_{x}\|_{L^{\infty}_{T}(B^{0}_{\infty,\infty,1})}+CT\cdot N^{-\frac{1}{5}}\cdot\ln N.\label{bern}
\end{align}
Moreover, note that $u_{x}$ solves
\begin{align*}
u_{xt}+uu_{xx}=u^{2}-\frac{1}{2}u_{x}^{2}-(1-\partial_{xx})^{-1}\big(u^{2}+\frac{1}{2}u_{x}^{2}\big).
\end{align*}
Then, following the similar proof of Lemma \ref{existence} and Lemma \ref{priori estimate}, we deduce
\begin{align}
\|u_{x}\|_{L^{\infty}_{T}(B^{0}_{\infty,\infty,1})}\leq&\|u_{x}\|_{L^{\infty}_{T}(B^{0}_{\infty,1}\cap B^{0}_{\infty,\infty,1})}\notag\\
\leq&\|u_{0x}\|_{B^{0}_{\infty,1}\cap B^{0}_{\infty,\infty,1}}+C\int_{0}^{T}\|u_{x}^{2}\|_{B^{0}_{\infty,1}\cap B^{0}_{\infty,\infty,1}}+\|u^{2}\|_{B^{0}_{\infty,1}\cap B^{0}_{\infty,\infty,1}}\ud\tau\notag\\
\leq&\|u_{0x}\|_{B^{0}_{\infty,1}\cap B^{0}_{\infty,\infty,1}}+C\int_{0}^{T}\|u_{x}\|_{B^{0}_{\infty,1}}\|u_{x}\|_{B^{0}_{\infty,\infty,1}}+\|u^{2}\|_{B^{1}_{\infty,\infty}}\ud\tau\notag\\
\leq&\|u_{0x}\|_{B^{0}_{\infty,1}\cap B^{0}_{\infty,\infty,1}}+C\int_{0}^{T}\|u_{x}\|_{B^{0}_{\infty,1}}\|u_{x}\|_{B^{0}_{\infty,\infty,1}}+\|u^{2}\|_{\mathcal{C}^{0,1}}\ud\tau\notag\\
\leq&CN^{\frac{9}{10}}+\underbrace{CN^{-\frac{1}{2}}\cdot\ln N}_{\text{which is a small quantity}}\|u_{x}\|_{L^{\infty}_{T}(B^{0}_{\infty,\infty,1})}+C\notag\\
\leq&CN^{\frac{9}{10}}.\label{uxinin1}
\end{align}
Plugging \eqref{uxinin1} into \eqref{bern}, we discover
\begin{align}\label{eb1}
\sum\limits_{j}2^{j}\big\|I_{3}\|_{L^{\infty}}\leq CN^{\frac{9}{10}-\frac{1}{2}}(\ln N)^{2}+CN^{-\frac{1}{2}-\frac{1}{5}}\ln N.
\end{align}
Multiplying both sides of \eqref{delta} by $2^{j}$ and performing the $l^{1}$ summation, by ${\rm(i)}-{\rm(iii)}$ we gain for any $t\in[0,T]$
\begin{align*}
\|u(t)\|_{B^{1}_{\infty,1}}=&\sum\limits_{j}2^{j}\big\|\Delta_{j}u\|_{L^{\infty}}=\sum\limits_{j}2^{j}\big\|\Delta_{j}u\circ y\|_{L^{\infty}}\\
\geq&t\|E_{0}\|_{B^{1}_{\infty,1}}-Ct\big(N^{-\frac{1}{10}}\ln N-N^{\frac{9}{10}-\frac{1}{2}}(\ln N)^{2}-N^{-\frac{1}{2}-\frac{1}{5}}\ln N\big)-\|u_{0}\|_{B^{1}_{\infty,1}}\\
\geq&Ct\Big(\frac{1}{4}N^{\frac{3}{5}}-N^{-\frac{1}{10}}\ln N-N^{\frac{9}{10}-\frac{1}{2}}(\ln N)^{2}-N^{-\frac{1}{2}-\frac{1}{5}}\ln N\Big)-C\\
\geq&\frac{1}{8}tN^{\frac{3}{5}}-C.
\end{align*}
where the second inequality holds by \eqref{fanli}. That is
$$\|u(t)\|_{B^{1}_{\infty,1}}\geq\frac{1}{16}N^{\frac{3}{5}-\frac{1}{2}}-C,\quad \forall  t\in [\frac{1}{2N^{\frac{1}{2}}},\frac{1}{N^{\frac{1}{2}}}].$$
Hence,
\begin{align}
\sup\limits_{t\in[0,\frac{1}{N^{\frac{1}{2}}}]}\|u(t)\|_{B^{1}_{\infty,1}}\geq\frac{1}{16}N^{\frac{3}{5}-\frac{1}{2}}-C>\ln N
\end{align}
which contradicts the hypothesis  \eqref{uxln}.

In conclusion, we obtain for $N>10$ large enough
\begin{align*}
&\|u\|_{L^{\infty}_{\bar{T}}(B^{1}_{\infty,1})}\geq\|u_{x}\|_{L^{\infty}_{\bar{T}}(B^{0}_{\infty,1})}\geq\ln N,\quad\quad \bar{T}=\frac{2}{N^{\frac{1}{2}}},\\
&\|u_{0}\|_{B^{1}_{\infty,1}}\lesssim N^{-\frac{1}{10}},
\end{align*}
that is we get the norm inflation and hence the ill-posedness of the CH equation. Thus, Theorem \ref{ill} is proved.
\end{proof}
\begin{rema}
In the proof, we have constructed an initial data $u_{0}$ such that
$$\|u_{0x}\|_{B^{0}_{\infty,1}}\lesssim N^{-\frac{1}{10}},~~\|u^2_{0x}\|_{B^{0}_{\infty,1}}\approx N^{\frac{7}{10}}.$$
Since $\|f^2 \|_{B^{0}_{\infty,1}\cap B^{0}_{\infty,\infty,1}}\leq \|f\|_{B^{0}_{\infty,1}}\|f \|_{B^{0}_{\infty,\infty,1}}$, to obtain the norm inflation in $B^{1}_{\infty,1}(\mathbb{R})$, one needs to construct an initial data $u_{0}$ satisfying the following approximate saturated inequality:
\begin{align}\label{guanjian}
c_2N^{\frac{7}{10}}\approx\|u^2_{0x}\|_{B^{0}_{\infty,1}}\leq\|u^2_{0x}\|_{B^{0}_{\infty,1}\cap B^{0}_{\infty,\infty,1}}\approx c_1N^{\frac{8}{10}},~~c_1\geq c_2.
\end{align}
Note that the CH equation is local well-posedness in $B^{1}_{\infty,1}\cap B^{1}_{\infty,\infty,1}$ (see Theorem \ref{well}), so if we want to prove the ill-posedness in $B^{1}_{\infty,1}(\mathbb{R})$, the equality \eqref{guanjian} will be the key skill.
\end{rema}

\subsection{A subset without norm inflation}
\par
In this subsection, we give the local existence and uniqueness for \eqref{ch} with initial data $u_{0}$ in $A:=\{f\in B^{1}_{\infty,1}|~f_{x}^{2}\in B^{0}_{\infty,1}\}$, a subset in $B^{1}_{\infty,1}$.
\begin{proof}[\rm\textbf{The proof of Theorem \ref{non}}:] Thanks to $u_0\in{B^1_{\infty,1}}\hookrightarrow \mathcal{C}^{0,1}$, one can obtain a unique solution $u\in L^{\infty}_T(\mathcal{C}^{0,1})$ ($\|u\|_{L^{\infty}_T(\mathcal{C}^{0,1})}\leq C\|u_0\|_{\mathcal{C}^{0,1}}$), see Theorem \ref{c01ill}. Moreover, one can choose a time $0< T_{0}<1$ sufficiently small such that $\frac{1}{2}\leq y_{\xi}(t,\xi)\leq 2$ where $y(t,\xi)$ satisfies \eqref{char}.

Noting that $E(t,x)=-(1-\partial_{xx})^{-1}\partial_{x}(\frac{u^2_x}{2})$, we rewrite the CH equation as follows:
\begin{equation*}
\left\{\begin{array}{l}
\frac{\ud}{\ud t}u+u\partial_{x}u=E-(1-\partial_{xx})^{-1}\partial_{x}(u^2),\quad t\in (0,T_{0}],\\
u(0,x)=u_0(x).
\end{array}\right.
\end{equation*}
Using again the embedding $\mathcal{C}^{0,1}\hookrightarrow B^{1}_{\infty,\infty}$, we see
\begin{align}
\|(1-\partial_{xx})^{-1}\partial_{x}(u^2)\|_{B^{1}_{\infty,1}}\leq  C\|u^2\|_{\mathcal{C}^{0,1}}\leq  C\|u_0\|^2_{B^1_{\infty,1}}.\label{u2}
\end{align}
Therefore, according to Lemma \ref{priori estimate} and the Gronwall inequality, we obtain
\begin{align}
\|u(t)\|_{B^1_{\infty,1}}\leq C \Big(\|u_0\|_{B^1_{\infty,1}}+\|u_0\|^2_{B^1_{\infty,1}}+ \int_0^t\|E(s)\|_{B^1_{\infty,1}}\ud s \Big),\qquad\ \forall~t\in(0,T_{0}].\label{ub1}
\end{align}
On the other hand, according to Lemma \ref{priori estimate}, we see from \eqref{E}--\eqref{nonlin} that
\begin{align}
\|E(t)\|_{B^{1}_{\infty,1}}\leq
C\Big(\|E_0\|_{B^{1}_{\infty,1}}+ \int_0^t \|u(s)\|_{B^{1}_{\infty,1}}\|E(s)\|_{B^{1}_{\infty,1}}+ \|u(s)\|^3_{B^{1}_{\infty,1}}{\ud}s\Big),\quad\ \forall~t\in(0,T_{0}].\label{ee}
\end{align}
Finally, combining \eqref{ub1} and \eqref{ee}, we deduce the following priori estimate
\begin{align}
\|u(t)\|_{B^{1}_{\infty,1}}+\|E(t)\|_{B^{1}_{\infty,1}}
\leq & C \Big(\|u_{0}\|_{B^{1}_{\infty,1}}+\|u_{0}\|^2_{B^{1}_{\infty,1}}+\|E_{0}\|_{B^{1}_{\infty,1}}
\notag\\
&+\int_0^t (\|u(s)\|^2_{B^{1}_{\infty,1}}+1)(\|E(s)\|_{B^{1}_{\infty,1}}+ \|u(s)\|_{B^{1}_{\infty,1}}){\ud}s\Big),\quad \forall~t\in[0,T_0].
\end{align}
Therefore, taking the similar calculations in \cite{yyg}, we can choose a lifespan $T\approx \frac{1}{C(\|u_{0}\|^2_{B^{1}_{\infty,1}}+\|E_{0}\|_{B^{1}_{\infty,1}}+1)}$ such that $\|u\|_{{L}^{\infty}_T(B^{1}_{\infty,1})}\leq C\|u_0\|_{B^{1}_{\infty,1}}$. Then, one can obtain a unique solution $u(t,x)\in \mathcal{C}_T(B^{1}_{\infty,1})\cap  \mathcal{C}^{1}_T(B^{0}_{\infty,1})$ and the norm inflation will not occur at this time. This proves Theorem \ref{non}.
\end{proof}

\noindent\textbf{Acknowledgements.}
The authors are thankful to Professor Pierre-Gilles, Lemari\'{e}-Rieusset and Professor Changxing, Miao for helpful discussion in constructing a counter example to present $B^{0}_{\infty,1}$ is not a Banach algebra. Guo was partially supported by GuangDong Basic and Applied Basic Research Foundation (No. 2020A1515111092) and Research Fund of Guangdong-Hong Kong-Macao Joint Laboratory for Intelligent Micro-Nano Optoelectronic Technology (No. 2020B1212030010). Ye and Yin were partially supported by NNSFC (Nos. 12171493 and 11671407),  FDCT (No. 0091/2018/A3), the Guangdong Special Support Program (No. 8-2015), and the key project of NSF of Guangdong province (No. 2016A03031104).

\addcontentsline{toc}{section}{\refname}

\end{document}